\newtheorem{theorem}{Theorem}
\theoremstyle{plain}
\newtheorem{corollary}{Corollary}
\newtheorem{lemma}{Lemma}
\newtheorem{remark}{Remark}
\numberwithin{equation}{section}
\begin{document}
\title[Integral inequalities]{A new generalization of some integral
inequalities for $(\alpha ,m)-$convex functions}
\author{\.{I}mdat \.{I}\c{s}can}
\address{Department of Mathematics, Faculty of Arts and Sciences,\\
Giresun University, 28100, Giresun, Turkey.}
\email{imdat.iscan@giresun.edu.tr}
\date{July 01, 2012}
\subjclass[2000]{26A51, 26D15}
\keywords{convex function, $(\alpha ,m)-$convex function, Hermite-Hadamard's
inequality, Simpson type inequalities, trapezoid inequality, midpoint
inequality.}

\begin{abstract}
In this paper, we derive new estimates for the remainder term of the
midpoint, trapezoid, and Simpson formulae for functions whose derivatives in
absolute value at certain power are $(\alpha ,m)-$convex.
\end{abstract}

\maketitle

\section{Introduction}

Let $f:I\subseteq \mathbb{R\rightarrow R}$ be a convex function defined on
the interval $I$ of real numbers and $a,b\in I$ with $a<b$. The following
double inequality is well known in the literature as Hermite-Hadamard
integral inequality 
\begin{equation}
f\left( \frac{a+b}{2}\right) \leq \frac{1}{b-a}\dint\limits_{a}^{b}f(x)dx%
\leq \frac{f(a)+f(b)}{2}\text{.}  \label{1-1}
\end{equation}

The class of $(\alpha ,m)-$convex functions was first introduced In \cite%
{M93}, and it is defined as follows:

The function $f:\left[ 0,b\right] \mathbb{\rightarrow R}$, $b>0$, is said to
be $(\alpha ,m)$-convex where $(\alpha ,m)\in \left[ 0,1\right] ^{2}$, if we
have$f$%
\begin{equation*}
\left( tx+m(1-t)y\right) \leq t^{\alpha }f(x)+m(1-t^{\alpha })f(y)
\end{equation*}

for all $x,y\in \left[ 0,b\right] $ and $t\in \left[ 0,1\right] $.

It can be easily that for $(\alpha ,m)\in \left\{ (0,0),(\alpha
,0),(1,0),(1,m),(1,1),(\alpha ,1)\right\} $ one obtains the following
classes of functions: increasing, $\alpha $-starshaped, starshaped, $m$%
-convex, convex, $\alpha $-convex.

Denote by $K_{m}^{\alpha }(b)$ the set of all $(\alpha ,m)$-convex functions
on $\left[ 0,b\right] $ for which $f(0)\leq 0$. For recent results and
generalizations concerning $(\alpha ,m)$-convex functions (see \cite%
{BOP08,M93,OAK11,OKS10,OSS11,SSOR09}).

In \cite{SSOR09}, Set et al. proved the following Hadamard type inequality
for $(\alpha ,m)$-convex functions:

\begin{theorem}
\label{A.1}Let $f:\left[ 0,\infty \right) \rightarrow \mathbb{R}$ be an $%
(\alpha ,m)$-convex function with $(\alpha ,m)\in \left( 0,1\right] ^{2}.$
If $0\leq a<b<\infty $ and $f\in L[a,b],$ then one has the inequality:%
\begin{equation}
\frac{1}{b-a}\dint\limits_{a}^{b}f(x)dx\leq \min \left\{ \frac{f(a)+\alpha
mf\left( \frac{b}{m}\right) }{\alpha +1},\frac{f(b)+\alpha mf\left( \frac{a}{%
m}\right) }{\alpha +1}\right\} .  \label{1-3}
\end{equation}
\end{theorem}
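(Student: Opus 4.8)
The plan is to estimate the integral average directly from the definition of $(\alpha,m)$-convexity after introducing a linear parameterization of the interval $[a,b]$. The crucial algebraic observation is that the defining inequality produces arguments of the form $tx + m(1-t)y$, so by choosing $x = a$ and $y = b/m$ one gets $ta + m(1-t)(b/m) = ta + (1-t)b$, which is precisely the affine interpolation between $a$ and $b$. (Both $a$ and $b/m$ lie in the admissible domain, since $a\ge 0$ and, as $m\in(0,1]$, $b/m\ge b\ge 0$, and similarly $a/m\ge a\ge 0$.)

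First I would perform the substitution $x = ta + (1-t)b = b - t(b-a)$, so that $dx = -(b-a)\,dt$ and, as $t$ runs over $[0,1]$, the variable $x$ sweeps the interval $[a,b]$. This rewrites the average as
\[
\frac{1}{b-a}\int_a^b f(x)\,dx = \int_0^1 f\bigl(ta + (1-t)b\bigr)\,dt .
\]
Applying $(\alpha,m)$-convexity with $x=a$ and $y=b/m$ bounds the integrand pointwise by $t^\alpha f(a) + m(1-t^\alpha)f(b/m)$. Integrating over $[0,1]$ and using $\int_0^1 t^\alpha\,dt = 1/(\alpha+1)$ together with $\int_0^1(1-t^\alpha)\,dt = \alpha/(\alpha+1)$ yields the first candidate bound $\dfrac{f(a)+\alpha m f(b/m)}{\alpha+1}$.

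To obtain the second bound I would repeat the argument with the mirror-image parameterization $x = tb + (1-t)a$, applying $(\alpha,m)$-convexity with $x=b$ and $y=a/m$; the symmetric computation produces $\dfrac{f(b)+\alpha m f(a/m)}{\alpha+1}$. Since both inequalities hold, the integral average is at most the smaller of the two right-hand sides, which is exactly the asserted inequality \eqref{1-3}. The computation is entirely routine once the substitution is in place: the only genuine step — and thus the main (mild) obstacle — is recognizing that the product $m(1-t)\cdot(b/m)$ collapses to $(1-t)b$, so that the convexity inequality applies to the correct linear interpolation. There is no analytic difficulty, since $f\in L[a,b]$ is assumed and the bounding integrand is a polynomial in $t$.
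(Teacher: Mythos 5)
Your proof is correct and is the standard argument for this result. Note that the paper itself contains no proof of Theorem \ref{A.1}---it is quoted as a known result from \cite{SSOR09}---and your derivation (rewriting the average as $\int_0^1 f\bigl(ta+(1-t)b\bigr)\,dt$, applying $(\alpha,m)$-convexity with the pairs $(x,y)=(a,b/m)$ and $(x,y)=(b,a/m)$, integrating, and taking the minimum of the two resulting bounds) is precisely the argument used in that reference, so there is nothing further to compare.
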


The following inequality is well known in the literature as Simpson's
inequality .

Let $f:\left[ a,b\right] \mathbb{\rightarrow R}$ be a four times
continuously differentiable mapping on $\left( a,b\right) $ and $\left\Vert
f^{(4)}\right\Vert _{\infty }=\underset{x\in \left( a,b\right) }{\sup }%
\left\vert f^{(4)}(x)\right\vert <\infty .$ Then the following inequality
holds:%
\begin{equation*}
\left\vert \frac{1}{3}\left[ \frac{f(a)+f(b)}{2}+2f\left( \frac{a+b}{2}%
\right) \right] -\frac{1}{b-a}\dint\limits_{a}^{b}f(x)dx\right\vert \leq 
\frac{1}{2880}\left\Vert f^{(4)}\right\Vert _{\infty }\left( b-a\right) ^{2}.
\end{equation*}%
\ \qquad In recent years many authors have studied error estimations for
Simpson's inequality; for refinements, counterparts, generalizations and new
Simpson's type inequalities, see \cite{P12,SA11,SSO10,SSO10a}.

In this paper, in order to provide a unified approach to establish midpoint
inequality, trapezoid inequality and Simpson's inequality for functions
whose derivatives in absolute value at certain power are $(\alpha ,m)$%
-convex, we derive a general integral identity for convex functions.

\section{Main results}

In order to generalize the classical Trapezoid, midpoint and Simpson type
inequalities and prove them, we need the following Lemma:

\begin{lemma}
\label{2.1}Let $f:I\subseteq \mathbb{R\rightarrow R}$ be a differentiable
mapping on $I^{\circ }$ such that $f^{\prime }\in L[a,b]$, where $a,b\in I$
with $a<b$, $\lambda ,\mu \in \left[ 0,1\right] $ and $m\in \left( 0,1\right]
.$ Then the following equality holds:%
\begin{eqnarray}
&&\lambda \left( \mu f(a)+\left( 1-\mu \right) f(mb)\right) +\left(
1-\lambda \right) f(\mu a+m\left( 1-\mu \right) b)-\frac{1}{mb-a}%
\dint\limits_{a}^{mb}f(x)dx  \label{2-1} \\
&=&\left( mb-a\right) \left[ \dint\limits_{0}^{\mu }\left[ -t+\lambda \left(
1-\mu \right) \right] f^{\prime }\left( ta+m(1-t)b\right) dt\right.  \notag
\\
&&\left. +\dint\limits_{\mu }^{1}\left[ -t+\left( 1-\alpha \lambda \right) %
\right] f^{\prime }\left( ta+m(1-t)b\right) dt\right] .  \notag
\end{eqnarray}
\end{lemma}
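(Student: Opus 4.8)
The plan is to verify the identity \eqref{2-1} by evaluating its right-hand side directly through integration by parts, treating each of the two integrals separately and then recombining. The key elementary fact I would use repeatedly is the chain rule
\[
\frac{d}{dt}f\left(ta+m(1-t)b\right)=(a-mb)f^{\prime}\left(ta+m(1-t)b\right),
\]
so that an antiderivative of $f^{\prime}\left(ta+m(1-t)b\right)$ is $-\frac{1}{mb-a}f\left(ta+m(1-t)b\right)$. This is precisely what converts the first-derivative integrals on the right back into function values plus the plain integral of $f$ appearing on the left.

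For the first integral $\int_{0}^{\mu}\left[-t+\lambda(1-\mu)\right]f^{\prime}(\cdots)\,dt$, I would integrate by parts with $u=-t+\lambda(1-\mu)$ (so $du=-dt$) and $dv=f^{\prime}(\cdots)\,dt$. The boundary term contributes a multiple of $f(mb)$ (from $t=0$) and a multiple of the midpoint value $f(\mu a+m(1-\mu)b)$ (from $t=\mu$), while the leftover $-\int v\,du$ produces $-\frac{1}{mb-a}\int_{0}^{\mu}f(\cdots)\,dt$. The second integral is handled identically with $u=-t+c$, where $c$ is the constant in the bracket; its boundary term contributes a multiple of $f(a)$ (from $t=1$) together with a second multiple of the midpoint value (from $t=\mu$), plus $-\frac{1}{mb-a}\int_{\mu}^{1}f(\cdots)\,dt$.

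Adding the two contributions, the two leftover integrals merge into $-\frac{1}{mb-a}\int_{0}^{1}f\left(ta+m(1-t)b\right)\,dt$, and the substitution $x=ta+m(1-t)b$ (under which $t=0\mapsto mb$, $t=1\mapsto a$, and $dt=-\frac{dx}{mb-a}$) turns this into exactly $-\frac{1}{mb-a}\int_{a}^{mb}f(x)\,dx$, matching the last term on the left of \eqref{2-1}. After multiplying through by $(mb-a)$, it remains to check that the collected boundary terms reproduce $\lambda\mu f(a)+\lambda(1-\mu)f(mb)+(1-\lambda)f(\mu a+m(1-\mu)b)$.

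The main obstacle is purely the sign and coefficient bookkeeping, especially for the midpoint value $f(\mu a+m(1-\mu)b)$, which receives contributions from both integrals at the shared endpoint $t=\mu$. Forcing the coefficient of $f(a)$ to equal $\lambda\mu$ pins down the constant in the second bracket as $c=1-\lambda\mu$; one then verifies that the midpoint coefficient automatically collapses to $c-\lambda(1-\mu)=1-\lambda$, as required, while the coefficient of $f(mb)$ emerges as $\lambda(1-\mu)$ without any constraint. This indicates that the bracket $\left[-t+(1-\alpha\lambda)\right]$ in the statement should in fact read $\left[-t+(1-\lambda\mu)\right]$, since no $\alpha$ can enter a computation that uses only differentiability of $f$. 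Once $c$ is fixed in this way, every coefficient falls into place and the verification is complete.
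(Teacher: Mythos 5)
Your proof is correct and follows exactly the route the paper itself indicates (integration by parts in each of the two integrals, then the substitution $x=ta+m(1-t)b$); the paper leaves these details to the reader, and your coefficient bookkeeping fills them in correctly, with the boundary terms at $t=\mu$ collapsing to the coefficient $1-\lambda$ as you describe. You are also right that the bracket $\left[-t+(1-\alpha\lambda)\right]$ in the statement is a typographical error for $\left[-t+(1-\lambda\mu)\right]$: no $\alpha$ can enter an identity that assumes only differentiability, and the paper's own proof of Theorem \ref{2.2} works with $\dint\limits_{\mu}^{1}\left\vert -t+\left(1-\lambda\mu\right)\right\vert dt$, confirming the intended kernel.
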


A simple proof of the equality can be done by performing an integration by
parts in the integrals from the right side and changing the variable. The
details are left to the interested reader.

\begin{theorem}
\label{2.2}Let $f:I\subseteq \lbrack 0,\infty )\rightarrow \mathbb{R}$ be a
differentiable mapping on $I^{\circ }$ such that $f^{\prime }\in L[a,b]$,
where $a,b\in I^{\circ }$ with $a<b$ and $\lambda ,\mu \in \left[ 0,1\right] 
$. If $\left\vert f^{\prime }\right\vert ^{q}$ is $(\alpha ,m)-$convex on $%
[a,b]$, for $(\alpha ,m)\in \left( 0,1\right] ^{2}$, $mb>a$, $q\geq 1,$ then
the following inequality holds:%
\begin{eqnarray}
&&\left\vert \lambda \left( \mu f(a)+\left( 1-\mu \right) f(mb)\right)
+\left( 1-\lambda \right) f(\mu a+m\left( 1-\mu \right) b)-\frac{1}{mb-a}%
\dint\limits_{a}^{mb}f(x)dx\right\vert  \label{2-2} \\
&\leq &\left\{ 
\begin{array}{cc}
\begin{array}{c}
\left( mb-a\right) \left\{ \varepsilon _{2}^{1-\frac{1}{q}}\left( \delta
_{3}\left\vert f^{\prime }(a)\right\vert ^{q}+m\delta _{4}\left\vert
f^{\prime }(b)\right\vert ^{q}\right) ^{\frac{1}{q}}\right. \\ 
\left. +\varepsilon _{3}^{1-\frac{1}{q}}\left( \beta _{1}\left\vert
f^{\prime }(a)\right\vert ^{q}+m\beta _{2}\left\vert f^{\prime
}(b)\right\vert ^{q}\right) ^{\frac{1}{q}}\right\}%
\end{array}%
, & \lambda \left( 1-\mu \right) \leq \mu \leq 1-\lambda \mu \\ 
\begin{array}{c}
\left( mb-a\right) \left\{ \varepsilon _{1}^{1-\frac{1}{q}}\left( \delta
_{1}\left\vert f^{\prime }(a)\right\vert ^{q}+m\delta _{2}\left\vert
f^{\prime }(b)\right\vert ^{q}\right) ^{\frac{1}{q}}\right. \\ 
\left. +\varepsilon _{3}^{1-\frac{1}{q}}\left( \beta _{1}\left\vert
f^{\prime }(a)\right\vert ^{q}+m\beta _{2}\left\vert f^{\prime
}(b)\right\vert ^{q}\right) ^{\frac{1}{q}}\right\}%
\end{array}%
, & \mu \leq \lambda \left( 1-\mu \right) \leq 1-\lambda \mu \\ 
\begin{array}{c}
\left( mb-a\right) \left\{ \varepsilon _{2}^{1-\frac{1}{q}}\left( \delta
_{3}\left\vert f^{\prime }(a)\right\vert ^{q}+m\delta _{4}\left\vert
f^{\prime }(b)\right\vert ^{q}\right) ^{\frac{1}{q}}\right. \\ 
\left. +\varepsilon _{4}^{1-\frac{1}{q}}\left( \beta _{3}\left\vert
f^{\prime }(a)\right\vert ^{q}+m\beta _{4}\left\vert f^{\prime
}(b)\right\vert ^{q}\right) ^{\frac{1}{q}}\right\}%
\end{array}%
, & \lambda \left( 1-\mu \right) \leq 1-\lambda \mu \leq \mu%
\end{array}%
\right. ,  \notag
\end{eqnarray}%
where%
\begin{equation*}
\varepsilon _{1}=-\frac{\mu ^{2}}{2}+\lambda (1-\mu )\mu ,\ \varepsilon _{2}=%
\left[ \lambda (1-\mu )\right] ^{2}-\varepsilon _{1},
\end{equation*}%
\begin{eqnarray*}
\varepsilon _{3} &=&(1-\lambda \mu )^{2}-(1-\lambda \mu )\left( 1+\mu
\right) +\frac{1+\mu ^{2}}{2}, \\
\varepsilon _{4} &=&\frac{1-\mu ^{2}}{2}-(1-\lambda \mu )\left( 1-\mu
\right) ,
\end{eqnarray*}%
\begin{eqnarray*}
\delta _{1} &=&\frac{\lambda \left( 1-\mu \right) \mu ^{\alpha +1}}{\alpha +1%
}-\frac{\mu ^{\alpha +2}}{\alpha +2}, \\
\delta _{2} &=&\lambda (1-\mu )\mu -\frac{\mu ^{2}}{2}-\delta _{1}, \\
\delta _{3} &=&\frac{2\left[ \lambda \left( 1-\mu \right) \right] ^{\alpha
+2}}{\left( \alpha +1\right) \left( \alpha +2\right) }-\frac{\lambda \left(
1-\mu \right) \mu ^{\alpha +1}}{\alpha +1}+\frac{\mu ^{\alpha +2}}{\alpha +2}%
, \\
\delta _{4} &=&\left[ \lambda \left( 1-\mu \right) \right] ^{2}-\lambda \mu
\left( 1-\mu \right) +\frac{\mu ^{2}}{2}-\delta _{3},
\end{eqnarray*}%
\begin{eqnarray*}
\beta _{1} &=&\frac{2\left( 1-\lambda \mu \right) ^{\alpha +2}}{\left(
\alpha +1\right) \left( \alpha +2\right) }-\frac{\left( 1-\lambda \mu
\right) \left( 1+\mu ^{\alpha +1}\right) }{\alpha +1}+\frac{1+\mu ^{\alpha
+2}}{\alpha +2}, \\
\beta _{2} &=&\left( 1-\lambda \mu \right) ^{2}-\left( 1-\lambda \mu \right)
\left( 1+\mu \right) +\frac{1+\mu ^{2}}{2}-\beta _{1}, \\
\beta _{3} &=&\frac{1-\mu ^{\alpha +2}}{\alpha +2}-\left( 1-\lambda \mu
\right) \frac{1-\mu ^{\alpha +1}}{\alpha +1}, \\
\beta _{4} &=&\left( 1-\lambda \mu \right) \left( \mu -1\right) +\frac{1-\mu
^{2}}{2}-\beta _{3}.
\end{eqnarray*}
\end{theorem}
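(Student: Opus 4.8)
The plan is to start from the identity of Lemma \ref{2.1}, take absolute values of both sides, and apply the triangle inequality to the right-hand side. This bounds the left-hand side of \eqref{2-2} by $(mb-a)$ times the sum of the two integrals
\[
\int_{0}^{\mu}\left\vert -t+\lambda(1-\mu)\right\vert \left\vert f^{\prime}(ta+m(1-t)b)\right\vert dt
\quad\text{and}\quad
\int_{\mu}^{1}\left\vert -t+(1-\lambda\mu)\right\vert \left\vert f^{\prime}(ta+m(1-t)b)\right\vert dt .
\]
To each integral I would apply the power-mean inequality with exponent $q\geq 1$: writing the (absolute value of the) affine kernel $K(t)$ as $|K(t)|^{1-1/q}\,|K(t)|^{1/q}$ gives
\[
\int |K(t)|\,|f^{\prime}(ta+m(1-t)b)|\,dt
\leq \left(\int |K(t)|\,dt\right)^{1-\frac1q}
\left(\int |K(t)|\,|f^{\prime}(ta+m(1-t)b)|^{q}\,dt\right)^{\frac1q}.
\]
The first factors will reproduce the quantities $\varepsilon_i^{1-1/q}$, since each $\varepsilon_i$ is exactly $\int |K(t)|\,dt$ over the relevant interval.

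For the second factors I would use that $|f^{\prime}|^{q}$ is $(\alpha,m)$-convex. As $ta+m(1-t)b$ is the argument of the convexity definition with $x=a$, $y=b$, we have the pointwise bound $|f^{\prime}(ta+m(1-t)b)|^{q}\leq t^{\alpha}|f^{\prime}(a)|^{q}+m(1-t^{\alpha})|f^{\prime}(b)|^{q}$. Substituting and integrating term by term against $|K(t)|$ produces a coefficient $\int |K(t)|\,t^{\alpha}\,dt$ for $|f^{\prime}(a)|^{q}$ (one of the $\delta_i$ or $\beta_i$) and a coefficient $m\int |K(t)|(1-t^{\alpha})\,dt$ for $|f^{\prime}(b)|^{q}$; the latter is the corresponding $\varepsilon$ minus the former, which is precisely why the stated constants satisfy $\delta_2=\varepsilon_1-\delta_1$, $\delta_4=\varepsilon_2-\delta_3$, $\beta_2=\varepsilon_3-\beta_1$ and $\beta_4=\varepsilon_4-\beta_3$. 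The whole argument thus reduces to evaluating elementary integrals $\int|{-t+c}|\,dt$ and $\int|{-t+c}|\,t^{\alpha}\,dt$ over $[0,\mu]$ and $[\mu,1]$.

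The only genuine work, and the main obstacle, is the case analysis that fixes which constants appear. The first kernel vanishes at $t=\lambda(1-\mu)$ and the second at $t=1-\lambda\mu$, so whether a kernel keeps a fixed sign or changes sign on its interval depends on whether this root lies inside $[0,\mu]$ or $[\mu,1]$. On $[0,\mu]$, the subcase $\lambda(1-\mu)\leq\mu$ forces a sign change and a split at $\lambda(1-\mu)$, yielding $\varepsilon_2,\delta_3,\delta_4$, while $\mu\leq\lambda(1-\mu)$ keeps the kernel of one sign, yielding $\varepsilon_1,\delta_1,\delta_2$; on $[\mu,1]$, the subcase $\mu\leq 1-\lambda\mu$ splits at $1-\lambda\mu$, giving $\varepsilon_3,\beta_1,\beta_2$, whereas $1-\lambda\mu\leq\mu$ gives $\varepsilon_4,\beta_3,\beta_4$. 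The three hypotheses $\lambda(1-\mu)\leq\mu\leq 1-\lambda\mu$, $\mu\leq\lambda(1-\mu)\leq 1-\lambda\mu$, and $\lambda(1-\mu)\leq 1-\lambda\mu\leq\mu$ select exactly three of these combinations, and matching the computed integrals to each combination completes the proof. Beyond this bookkeeping there is no conceptual difficulty.
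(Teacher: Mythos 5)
Your proposal is correct and follows essentially the same route as the paper's own proof: the identity of Lemma \ref{2.1}, the triangle inequality, the power mean inequality applied separately to the integrals over $[0,\mu]$ and $[\mu,1]$, the pointwise $(\alpha,m)$-convexity bound on $\left\vert f^{\prime}\right\vert^{q}$, and the sign analysis of the kernels at their roots $\lambda(1-\mu)$ and $1-\lambda\mu$ to produce the three cases. Your observation that $\delta_{2}=\varepsilon_{1}-\delta_{1}$, $\delta_{4}=\varepsilon_{2}-\delta_{3}$, $\beta_{2}=\varepsilon_{3}-\beta_{1}$, $\beta_{4}=\varepsilon_{4}-\beta_{3}$ correctly captures the structure of the constants, matching the paper's computation.
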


\begin{proof}
From Lemma \ref{2.1} and using the properties of modulus and the well known
power mean inequality, we have%
\begin{eqnarray*}
&&\left\vert \lambda \left( \mu f(a)+\left( 1-\mu \right) f(mb)\right)
+\left( 1-\lambda \right) f(\mu a+m\left( 1-\mu \right) b)-\frac{1}{mb-a}%
\dint\limits_{a}^{mb}f(x)dx\right\vert \\
&\leq &\left( mb-a\right) \left[ \dint\limits_{0}^{\mu }\left\vert
-t+\lambda \left( 1-\mu \right) \right\vert \left\vert f^{\prime }\left(
ta+m(1-t)b\right) \right\vert dt\right. \\
&&\left. +\dint\limits_{\mu }^{1}\left\vert -t+\left( 1-\alpha \lambda
\right) \right\vert \left\vert f^{\prime }\left( ta+m(1-t)b\right)
\right\vert dt\right] \\
&\leq &\left( mb-a\right) \left\{ \left( \dint\limits_{0}^{\mu }\left\vert
-t+\lambda \left( 1-\mu \right) \right\vert dt\right) ^{1-\frac{1}{q}}\left(
\dint\limits_{0}^{\mu }\left\vert -t+\lambda \left( 1-\mu \right)
\right\vert \left\vert f^{\prime }\left( ta+m(1-t)b\right) \right\vert
^{q}dt\right) ^{\frac{1}{q}}\right.
\end{eqnarray*}%
\begin{equation}
\left. +\left( \dint\limits_{\mu }^{1}\left\vert -t+\left( 1-\lambda \mu
\right) \right\vert dt\right) ^{1-\frac{1}{q}}\left( \dint\limits_{\mu
}^{1}\left\vert -t+\left( 1-\lambda \mu \right) \right\vert \left\vert
f^{\prime }\left( ta+m(1-t)b\right) \right\vert ^{q}dt\right) ^{\frac{1}{q}%
}\right\}  \label{2-2a}
\end{equation}%
Since $\left\vert f^{\prime }\right\vert ^{q}$ is $(\alpha ,m)-$convex on $%
[a,b],$ we know that for $t\in \left[ 0,1\right] $%
\begin{equation*}
\left\vert f^{\prime }\left( ta+m(1-t)b\right) \right\vert ^{q}\leq
t^{\alpha }\left\vert f^{\prime }(a)\right\vert ^{q}+m(1-t^{\alpha
})\left\vert f^{\prime }(b)\right\vert ^{q}
\end{equation*}%
hence, by simple computation%
\begin{equation}
\dint\limits_{0}^{\mu }\left\vert -t+\lambda \left( 1-\mu \right)
\right\vert dt=\left\{ 
\begin{array}{cc}
\varepsilon _{1}, & \mu \leq \lambda (1-\mu ) \\ 
\ \varepsilon _{2}, & \mu \geq \lambda (1-\mu )%
\end{array}%
\right. ,  \label{2-2b}
\end{equation}%
\begin{equation}
\dint\limits_{\mu }^{1}\left\vert -t+\left( 1-\lambda \mu \right)
\right\vert dt=\left\{ 
\begin{array}{cc}
\varepsilon _{3}, & \mu \leq 1-\lambda \mu \\ 
\varepsilon _{4}, & \mu \geq 1-\lambda \mu%
\end{array}%
\right. ,  \label{2-2c}
\end{equation}%
\begin{eqnarray}
&&\dint\limits_{0}^{\mu }\left\vert -t+\lambda \left( 1-\mu \right)
\right\vert \left\vert f^{\prime }\left( ta+m(1-t)b\right) \right\vert ^{q}dt
\notag \\
&\leq &\dint\limits_{0}^{\mu }\left\vert -t+\lambda \left( 1-\mu \right)
\right\vert \left[ t^{\alpha }\left\vert f^{\prime }(a)\right\vert
^{q}+m(1-t^{\alpha })\left\vert f^{\prime }(b)\right\vert ^{q}\right] dt 
\notag \\
&=&\left\{ 
\begin{array}{cc}
\delta _{1}\left\vert f^{\prime }(a)\right\vert ^{q}+m\delta _{2}\left\vert
f^{\prime }(b)\right\vert ^{q}, & \mu \leq \lambda (1-\mu ) \\ 
\delta _{3}\left\vert f^{\prime }(a)\right\vert ^{q}+m\delta _{4}\left\vert
f^{\prime }(b)\right\vert ^{q}, & \mu \geq \lambda (1-\mu )%
\end{array}%
\right. ,  \label{2-2d}
\end{eqnarray}%
and%
\begin{eqnarray}
&&\dint\limits_{\mu }^{1}\left\vert -t+\left( 1-\lambda \mu \right)
\right\vert \left\vert f^{\prime }\left( ta+m(1-t)b\right) \right\vert ^{q}dt
\notag \\
&\leq &\dint\limits_{\mu }^{1}\left\vert -t+\left( 1-\lambda \mu \right)
\right\vert \left[ t^{\alpha }\left\vert f^{\prime }(a)\right\vert
^{q}+m(1-t^{\alpha })\left\vert f^{\prime }(b)\right\vert ^{q}\right] dt 
\notag \\
&=&\left\{ 
\begin{array}{cc}
\beta _{1}\left\vert f^{\prime }(a)\right\vert ^{q}+m\beta _{2}\left\vert
f^{\prime }(b)\right\vert ^{q}, & \mu \leq 1-\lambda \mu \\ 
\beta _{3}\left\vert f^{\prime }(a)\right\vert ^{q}+m\beta _{4}\left\vert
f^{\prime }(b)\right\vert ^{q}, & \mu \geq 1-\lambda \mu%
\end{array}%
\right. .  \label{2-2e}
\end{eqnarray}%
Thus, using (\ref{2-2b})-(\ref{2-2e}) in (\ref{2-2a}), we obtain the
inequality (\ref{2-2}). This completes the proof.
\end{proof}

\begin{corollary}
\label{2.3}Under the assumptions of Theorem \ref{2.2} with $q=1$%
\begin{eqnarray*}
&&\left\vert \lambda \left( \mu f(a)+\left( 1-\mu \right) f(mb)\right)
+\left( 1-\lambda \right) f(\mu a+m\left( 1-\mu \right) b)-\frac{1}{mb-a}%
\dint\limits_{a}^{mb}f(x)dx\right\vert \\
&\leq &\left\{ 
\begin{array}{cc}
\left( mb-a\right) \left\{ \left( \delta _{3}+\beta _{1}\right) \left\vert
f^{\prime }(a)\right\vert +m\left( \delta _{4}+\beta _{2}\right) \left\vert
f^{\prime }(b)\right\vert \right\} , & \lambda \left( 1-\mu \right) \leq \mu
\leq 1-\lambda \mu \\ 
\left( mb-a\right) \left\{ \left( \delta _{1}+\beta _{1}\right) \left\vert
f^{\prime }(a)\right\vert +m\left( \delta _{2}+\beta _{2}\right) \left\vert
f^{\prime }(b)\right\vert \right\} , & \mu \leq \lambda \left( 1-\mu \right)
\leq 1-\lambda \mu \\ 
\left( mb-a\right) \left\{ \left( \delta _{3}+\beta _{3}\right) \left\vert
f^{\prime }(a)\right\vert +m\left( \delta _{4}+\beta _{4}\right) \left\vert
f^{\prime }(b)\right\vert \right\} , & \lambda \left( 1-\mu \right) \leq
1-\lambda \mu \leq \mu%
\end{array}%
\right. .
\end{eqnarray*}
\end{corollary}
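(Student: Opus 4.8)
The plan is to observe that Corollary \ref{2.3} is nothing more than the specialization $q=1$ of Theorem \ref{2.2}, so the whole argument reduces to substituting $q=1$ into inequality (\ref{2-2}) and simplifying. The single point to notice is that the power-mean weight exponent $1-\tfrac{1}{q}$ vanishes when $q=1$: each prefactor $\varepsilon_i^{1-1/q}$ collapses to $\varepsilon_i^{0}=1$ and drops out of the estimate, while each bracket $\left(\delta_j|f'(a)|^{q}+m\delta_k|f'(b)|^{q}\right)^{1/q}$ reduces to the plain linear combination $\delta_j|f'(a)|+m\delta_k|f'(b)|$.

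Concretely, I would take the three-case bound (\ref{2-2}), put $q=1$ in each branch, and collect the two resulting linear expressions in $|f'(a)|$ and $|f'(b)|$. In the branch $\lambda(1-\mu)\le\mu\le1-\lambda\mu$ this yields $(\delta_3+\beta_1)|f'(a)|+m(\delta_4+\beta_2)|f'(b)|$; in the branch $\mu\le\lambda(1-\mu)\le1-\lambda\mu$ it yields $(\delta_1+\beta_1)|f'(a)|+m(\delta_2+\beta_2)|f'(b)|$; and in the branch $\lambda(1-\mu)\le1-\lambda\mu\le\mu$ it yields $(\delta_3+\beta_3)|f'(a)|+m(\delta_4+\beta_4)|f'(b)|$, each multiplied by the factor $(mb-a)$. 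These are exactly the three cases in the statement.

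Alternatively, and perhaps more transparently, I would bypass the power-mean inequality entirely: for $q=1$ the hypothesis says that $|f'|$ itself is $(\alpha,m)$-convex, so starting from Lemma \ref{2.1} and the triangle inequality one bounds $|f'(ta+m(1-t)b)|\le t^{\alpha}|f'(a)|+m(1-t^{\alpha})|f'(b)|$ and integrates term by term. The two resulting integrals are evaluated exactly by the computations already recorded in (\ref{2-2d}) and (\ref{2-2e}), whose right-hand sides supply the coefficients $\delta_j$ and $\beta_k$ in each of the three cases determined by the sign of $-t+\lambda(1-\mu)$ on $[0,\mu]$ and of $-t+(1-\lambda\mu)$ on $[\mu,1]$.

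There is no genuine obstacle here; the only care needed is the bookkeeping of which labels $\delta_j,\beta_k$ attach to which ordering of $\mu$, $\lambda(1-\mu)$, and $1-\lambda\mu$, since the three branches correspond precisely to the three admissible orderings of these quantities. Checking that the case conditions stated in the corollary agree with those governing (\ref{2-2d})--(\ref{2-2e}) then completes the proof.
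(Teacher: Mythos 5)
Your proposal is correct and matches the paper's (implicit) argument exactly: Corollary \ref{2.3} is obtained by setting $q=1$ in inequality (\ref{2-2}), so the factors $\varepsilon_i^{1-1/q}$ become $1$ and the two bracketed terms in each branch merge into the stated linear combinations of $\left\vert f^{\prime}(a)\right\vert$ and $\left\vert f^{\prime}(b)\right\vert$. Your bookkeeping of the three cases agrees with the theorem, so nothing further is needed.
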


\begin{remark}
In Corollary \ref{2.3}, (i) If we choose $\mu =\frac{1}{2}$, $\lambda =\frac{%
1}{3}$ and $\alpha =1,$ we have%
\begin{eqnarray*}
&&\left\vert \frac{1}{6}\left[ f(a)+4f\left( \frac{a+mb}{2}\right) +f(mb)%
\right] -\frac{1}{mb-a}\dint\limits_{a}^{mb}f(x)dx\right\vert \\
&\leq &\frac{5}{72}\left( mb-a\right) \left\{ \left\vert f^{\prime
}(a)\right\vert +m\left\vert f^{\prime }(b)\right\vert \right\}
\end{eqnarray*}%
which is the same of the Simpson type inequality in \cite[Corollary 2.3 (ii)]%
{P12}.

(ii) If we choose $\mu =\frac{1}{2}$, $\lambda =1$ and $\alpha =1,$ we have%
\begin{eqnarray*}
&&\left\vert \frac{f(a)+f(mb)}{2}-\frac{1}{mb-a}\dint\limits_{a}^{mb}f(x)dx%
\right\vert \\
&\leq &\frac{\left( mb-a\right) }{8}\left\{ \left\vert f^{\prime
}(a)\right\vert +m\left\vert f^{\prime }(b)\right\vert \right\}
\end{eqnarray*}%
which is the same of the trapezoid type inequality in \cite[Corollary 2.3 (i)%
]{P12}.

(iii) If we choose $\mu =\frac{1}{2}$, $\lambda =0$ and $\alpha =1,$ we have%
\begin{eqnarray*}
&&\left\vert f\left( \frac{a+mb}{2}\right) -\frac{1}{mb-a}%
\dint\limits_{a}^{mb}f(x)dx\right\vert \\
&\leq &\frac{\left( mb-a\right) }{8}\left\{ \left\vert f^{\prime
}(a)\right\vert +m\left\vert f^{\prime }(b)\right\vert \right\} .
\end{eqnarray*}
\end{remark}

\begin{corollary}
\label{2.3a}Under the assumptions of Theorem \ref{2.2} with $\mu =\frac{1}{2}
$ and $\lambda =\frac{1}{3}$, we have%
\begin{eqnarray*}
&&\left\vert \frac{1}{6}\left[ f(a)+4f\left( \frac{a+mb}{2}\right) +f(mb)%
\right] -\frac{1}{mb-a}\dint\limits_{a}^{mb}f(x)dx\right\vert \\
&\leq &\left( mb-a\right) \left( \frac{5}{72}\right) ^{1-\frac{1}{q}}\left\{
\left( \delta _{3}^{\ast }\left\vert f^{\prime }(a)\right\vert ^{q}+m\left( 
\frac{5}{72}-\delta _{3}^{\ast }\right) \left\vert f^{\prime }(b)\right\vert
^{q}\right) ^{\frac{1}{q}}\right. \\
&&\left. +\left( \beta _{1}^{\ast }\left\vert f^{\prime }(a)\right\vert
^{q}+m\left( \frac{5}{72}-\beta _{1}^{\ast }\right) \left\vert f^{\prime
}(b)\right\vert ^{q}\right) ^{\frac{1}{q}}\right\} ,
\end{eqnarray*}%
where%
\begin{equation*}
\delta _{3}^{\ast }=\frac{2+3^{\alpha +1}\left( 2\alpha +1\right) }{%
6^{\alpha +2}\left( \alpha +1\right) \left( \alpha +2\right) }
\end{equation*}%
and%
\begin{equation*}
\beta _{1}^{\ast }=\frac{2\times 5^{\alpha +2}+6^{\alpha +1}\left( \alpha
-4\right) -3^{\alpha +1}\left( 2\alpha +7\right) }{6^{\alpha +2}\left(
\alpha +1\right) \left( \alpha +2\right) }.
\end{equation*}
\end{corollary}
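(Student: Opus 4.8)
The plan is to specialize Theorem \ref{2.2} to $\mu=\frac{1}{2}$ and $\lambda=\frac{1}{3}$, so the first task is to decide which of the three branches of (\ref{2-2}) is active. Evaluating the threshold quantities gives $\lambda(1-\mu)=\frac{1}{6}$, $\mu=\frac{1}{2}$ and $1-\lambda\mu=\frac{5}{6}$, whence $\lambda(1-\mu)\le\mu\le 1-\lambda\mu$ and we are in the first case. Thus the quantity to be simplified is
\[
(mb-a)\Big\{\varepsilon_{2}^{1-\frac{1}{q}}\big(\delta_{3}\left\vert f'(a)\right\vert^{q}+m\delta_{4}\left\vert f'(b)\right\vert^{q}\big)^{\frac{1}{q}}+\varepsilon_{3}^{1-\frac{1}{q}}\big(\beta_{1}\left\vert f'(a)\right\vert^{q}+m\beta_{2}\left\vert f'(b)\right\vert^{q}\big)^{\frac{1}{q}}\Big\}.
\]

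Next I would compute the two outer weights. From $\varepsilon_{1}=-\frac{\mu^{2}}{2}+\lambda(1-\mu)\mu=-\frac{1}{24}$ and $\varepsilon_{2}=[\lambda(1-\mu)]^{2}-\varepsilon_{1}$ one obtains $\varepsilon_{2}=\frac{1}{36}+\frac{1}{24}=\frac{5}{72}$, and substituting $\mu=\frac{1}{2}$, $1-\lambda\mu=\frac{5}{6}$ into the formula for $\varepsilon_{3}$ gives $\varepsilon_{3}=\frac{25}{36}-\frac{5}{4}+\frac{5}{8}=\frac{5}{72}$ as well. This produces the common factor $\left(\frac{5}{72}\right)^{1-1/q}$ seen in the statement.

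The core of the argument is the reduction of the inner coefficients $\delta_{3}$ and $\beta_{1}$ to the closed forms $\delta_{3}^{\ast}$ and $\beta_{1}^{\ast}$. Inserting $\lambda(1-\mu)=\frac{1}{6}$ and $\mu=\frac{1}{2}$ into $\delta_{3}$ and placing its three terms over the common denominator $6^{\alpha+2}(\alpha+1)(\alpha+2)$---using $6^{\alpha+2}=2^{\alpha+2}3^{\alpha+2}$ to rewrite the powers of $2$ as powers of $3$---collapses the numerator to $2+3^{\alpha+1}(2\alpha+1)$, which is precisely $\delta_{3}^{\ast}$. The identical procedure applied to $\beta_{1}$, now with $1-\lambda\mu=\frac{5}{6}$ so that a term $2\cdot 5^{\alpha+2}$ appears, yields the numerator $2\cdot 5^{\alpha+2}+6^{\alpha+1}(\alpha-4)-3^{\alpha+1}(2\alpha+7)$, i.e.\ $\beta_{1}^{\ast}$. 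I expect this bookkeeping of the mixed powers of $2$, $3$, $5$ and $6$ over a single denominator to be the one genuinely delicate step.

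Finally, to recover $\delta_{4}$ and $\beta_{2}$ I would avoid a second integration by exploiting the identities built into their definitions, namely $\delta_{3}+\delta_{4}=[\lambda(1-\mu)]^{2}-\lambda\mu(1-\mu)+\frac{\mu^{2}}{2}=\varepsilon_{2}$ and $\beta_{1}+\beta_{2}=(1-\lambda\mu)^{2}-(1-\lambda\mu)(1+\mu)+\frac{1+\mu^{2}}{2}=\varepsilon_{3}$; these coincidences are exactly the statement that integrating the common weight against $t^{\alpha}+(1-t^{\alpha})=1$ returns the plain integrals $\varepsilon_{2}$ and $\varepsilon_{3}$. Hence $\delta_{4}=\frac{5}{72}-\delta_{3}^{\ast}$ and $\beta_{2}=\frac{5}{72}-\beta_{1}^{\ast}$, and substituting $\varepsilon_{2}=\varepsilon_{3}=\frac{5}{72}$, $\delta_{3}=\delta_{3}^{\ast}$, $\beta_{1}=\beta_{1}^{\ast}$ together with these two into the first-case bound, then factoring out $\left(\frac{5}{72}\right)^{1-1/q}$, gives the asserted inequality.
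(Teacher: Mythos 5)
Your proposal is correct and follows exactly the route the paper intends: the corollary is stated without proof as a direct specialization of Theorem \ref{2.2}, and you correctly identify the active branch ($\lambda(1-\mu)=\frac{1}{6}\leq\mu=\frac{1}{2}\leq 1-\lambda\mu=\frac{5}{6}$), compute $\varepsilon_{2}=\varepsilon_{3}=\frac{5}{72}$, reduce $\delta_{3}$ and $\beta_{1}$ to $\delta_{3}^{\ast}$ and $\beta_{1}^{\ast}$, and use the identities $\delta_{3}+\delta_{4}=\varepsilon_{2}$ and $\beta_{1}+\beta_{2}=\varepsilon_{3}$ to get the complementary coefficients. All the arithmetic checks out, and your observation that those identities come from integrating the weight against $t^{\alpha}+(1-t^{\alpha})=1$ is a nice touch that makes the bookkeeping transparent.
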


\begin{remark}
In Corollary \ref{2.3a}, if we take $\alpha =m=1,$ we obtain the following
inequality%
\begin{eqnarray*}
&&\left\vert \frac{1}{6}\left[ f(a)+4f\left( \frac{a+b}{2}\right) +f(b)%
\right] -\frac{1}{b-a}\dint\limits_{a}^{b}f(x)dx\right\vert \\
&\leq &\left( b-a\right) \left( \frac{5}{72}\right) ^{1-\frac{1}{q}}\left\{
\left( \frac{29}{1296}\left\vert f^{\prime }(b)\right\vert ^{q}+\frac{61}{%
1296}\left\vert f^{\prime }(a)\right\vert ^{q}\right) ^{\frac{1}{q}}\right.
\\
&&\left. +\left( \frac{61}{1296}\left\vert f^{\prime }(b)\right\vert ^{q}+%
\frac{29}{1296}\left\vert f^{\prime }(a)\right\vert ^{q}\right) ^{\frac{1}{q}%
}\right\} ,
\end{eqnarray*}%
which is the same of the inequality in \cite[Theorem 10]{SSO10} for $s=1$ .
\end{remark}

\begin{corollary}
Under the assumptions of Theorem \ref{2.2} with $\mu =\frac{1}{2}$ and $%
\lambda =1$, we have%
\begin{equation*}
\left\vert \frac{f(a)+f(mb)}{2}-\frac{1}{mb-a}\dint\limits_{a}^{mb}f(x)dx%
\right\vert \leq \left( mb-a\right) \left( \frac{1}{8}\right) ^{1-\frac{1}{q}%
}
\end{equation*}%
\begin{eqnarray*}
&&\times \left\{ \left( \frac{1}{2^{\alpha +2}\left( \alpha +1\right) \left(
\alpha +2\right) }\left\vert f^{\prime }(a)\right\vert ^{q}+m\left( \frac{1}{%
8}-\frac{1}{2^{\alpha +2}\left( \alpha +1\right) \left( \alpha +2\right) }%
\right) \left\vert f^{\prime }(b)\right\vert ^{q}\right) ^{\frac{1}{q}%
}\right. \\
&&\left. +\left( \frac{\alpha 2^{\alpha +1}+1}{2^{\alpha +2}\left( \alpha
+1\right) \left( \alpha +2\right) }\left\vert f^{\prime }(a)\right\vert
^{q}+m\left( \frac{1}{8}-\frac{\alpha 2^{\alpha +1}+1}{2^{\alpha +2}\left(
\alpha +1\right) \left( \alpha +2\right) }\right) \left\vert f^{\prime
}(b)\right\vert ^{q}\right) ^{\frac{1}{q}}\right\} .
\end{eqnarray*}
\end{corollary}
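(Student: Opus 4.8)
The plan is to obtain this statement directly from Theorem~\ref{2.2} by setting $\mu=\tfrac12$ and $\lambda=1$; no new estimate is required, only a degeneration of the constants. The first thing I would check is which of the three cases in \eqref{2-2} applies. With these values one computes $\lambda(1-\mu)=\tfrac12$, $\mu=\tfrac12$ and $1-\lambda\mu=\tfrac12$, so the three threshold quantities coincide and we sit exactly on the common boundary $\lambda(1-\mu)=\mu=1-\lambda\mu$ of all three branches. Consequently each branch yields the same bound, and I would simply evaluate the first one, corresponding to $\lambda(1-\mu)\le\mu\le1-\lambda\mu$.

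Next I would simplify the left-hand side. Since $\lambda=1$ annihilates the $(1-\lambda)$ term and $\mu=1-\mu=\tfrac12$, the expression $\lambda(\mu f(a)+(1-\mu)f(mb))+(1-\lambda)f(\mu a+m(1-\mu)b)$ collapses to $\tfrac{f(a)+f(mb)}{2}$, exactly the quantity in the corollary. For the prefactors I would substitute into $\varepsilon_2$ and $\varepsilon_3$: a short computation gives $\varepsilon_1=-\tfrac{\mu^2}{2}+\lambda(1-\mu)\mu=\tfrac18$, hence $\varepsilon_2=[\lambda(1-\mu)]^2-\varepsilon_1=\tfrac14-\tfrac18=\tfrac18$, and likewise $\varepsilon_3=\tfrac18$. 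These reproduce the common factor $\bigl(\tfrac18\bigr)^{1-1/q}$ attached to both summands.

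The heart of the matter is the explicit evaluation of the four coefficients $\delta_3,\delta_4,\beta_1,\beta_2$. Here I would exploit the structural identities $\delta_3+\delta_4=\varepsilon_2$ and $\beta_1+\beta_2=\varepsilon_3$, which hold because $\delta_3,\beta_1$ are the $t^{\alpha}$-weighted parts and $\delta_4,\beta_2$ the $(1-t^{\alpha})$-weighted parts of the same integrals $\int_0^\mu|{-t+\lambda(1-\mu)}|\,dt$ and $\int_\mu^1|{-t+(1-\lambda\mu)}|\,dt$; thus it suffices to pin down $\delta_3$ and $\beta_1$. Substituting $\mu=\lambda(1-\mu)=1-\lambda\mu=\tfrac12$ into the defining formulas and factoring out $2^{-(\alpha+2)}$, the fractional terms telescope: the bracket $\tfrac{2}{(\alpha+1)(\alpha+2)}-\tfrac1{\alpha+1}+\tfrac1{\alpha+2}$ reduces to $\tfrac1{(\alpha+1)(\alpha+2)}$, giving $\delta_3=\tfrac{1}{2^{\alpha+2}(\alpha+1)(\alpha+2)}$ and, after collecting the extra contribution $-\tfrac12/(\alpha+1)+1/(\alpha+2)$, the value $\beta_1=\tfrac{\alpha 2^{\alpha+1}+1}{2^{\alpha+2}(\alpha+1)(\alpha+2)}$. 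Then $\delta_4=\tfrac18-\delta_3$ and $\beta_2=\tfrac18-\beta_1$, and inserting all of this into the first branch of \eqref{2-2} produces the asserted inequality.

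I expect the only delicate point to be the bookkeeping in $\beta_1$, where one must correctly split $(1-\lambda\mu)(1+\mu^{\alpha+1})$ and recombine the $2^{-(\alpha+2)}$ terms with the leftover fractions that are linear in $\alpha$; the $\delta_3$ simplification is cleaner because its constant pieces cancel outright. Everything else is a direct substitution, so the argument amounts to verifying that the general constants of Theorem~\ref{2.2} degenerate to the stated closed forms at this boundary point.
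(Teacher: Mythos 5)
Your proposal is correct and is exactly the argument the paper intends: the corollary follows by direct substitution of $\mu=\tfrac12$, $\lambda=1$ into Theorem \ref{2.2}, noting that $\lambda(1-\mu)=\mu=1-\lambda\mu=\tfrac12$ places one on the common boundary of the three cases, with $\varepsilon_2=\varepsilon_3=\tfrac18$, $\delta_3=\tfrac{1}{2^{\alpha+2}(\alpha+1)(\alpha+2)}$, $\beta_1=\tfrac{\alpha 2^{\alpha+1}+1}{2^{\alpha+2}(\alpha+1)(\alpha+2)}$, and $\delta_4=\tfrac18-\delta_3$, $\beta_2=\tfrac18-\beta_1$, all of which you computed correctly.
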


\begin{corollary}
Under the assumptions of Theorem \ref{2.2} with $\mu =\frac{1}{2}$ and $%
\lambda =0$, we have%
\begin{eqnarray*}
&&\left\vert f\left( \frac{a+mb}{2}\right) -\frac{1}{mb-a}%
\dint\limits_{a}^{mb}f(x)dx\right\vert \\
&\leq &\left( mb-a\right) \left( \frac{1}{8}\right) ^{1-\frac{1}{q}}\times
\left\{ \left( \frac{1}{2^{\alpha +2}\left( \alpha +2\right) }\left\vert
f^{\prime }(a)\right\vert ^{q}+m\left( \frac{1}{8}-\frac{1}{2^{\alpha
+2}\left( \alpha +2\right) }\right) \left\vert f^{\prime }(b)\right\vert
^{q}\right) ^{\frac{1}{q}}\right. \\
&&\left. +\left( \frac{2^{\alpha +2}-\alpha -3}{2^{\alpha +2}\left( \alpha
+1\right) \left( \alpha +2\right) }\left\vert f^{\prime }(a)\right\vert
^{q}+m\left( \frac{1}{8}-\frac{2^{\alpha +2}-\alpha -3}{2^{\alpha +2}\left(
\alpha +1\right) \left( \alpha +2\right) }\right) \left\vert f^{\prime
}(b)\right\vert ^{q}\right) ^{\frac{1}{q}}\right\}
\end{eqnarray*}
\end{corollary}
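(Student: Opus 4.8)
The plan is to obtain this corollary as a pure specialization of Theorem~\ref{2.2}, since the entire analytic argument---the power-mean estimate and the evaluation of the four auxiliary integrals \eqref{2-2b}--\eqref{2-2e}---has already been performed there. The only work left is to decide which of the three branches of \eqref{2-2} is active and then to evaluate the relevant constants. Setting $\mu=\tfrac12$ and $\lambda=0$ gives $\lambda(1-\mu)=0$, $\mu=\tfrac12$, and $1-\lambda\mu=1$, so that $\lambda(1-\mu)\le\mu\le 1-\lambda\mu$ holds. Hence the first branch applies and the bound takes the form
\begin{equation*}
(mb-a)\left\{\varepsilon_2^{1-\frac1q}\left(\delta_3|f'(a)|^q+m\delta_4|f'(b)|^q\right)^{\frac1q}+\varepsilon_3^{1-\frac1q}\left(\beta_1|f'(a)|^q+m\beta_2|f'(b)|^q\right)^{\frac1q}\right\}.
\end{equation*}

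First I would dispose of the four easy constants. From the definitions, $\varepsilon_1=-\mu^2/2+\lambda(1-\mu)\mu=-\tfrac18$, whence $\varepsilon_2=[\lambda(1-\mu)]^2-\varepsilon_1=\tfrac18$, and directly $\varepsilon_3=1-\tfrac32+\tfrac58=\tfrac18$; this accounts for the common factor $(\tfrac18)^{1-1/q}$ in the statement. Because $\lambda=0$ annihilates the first two terms of $\delta_3$, one is left with $\delta_3=\frac{(1/2)^{\alpha+2}}{\alpha+2}=\frac{1}{2^{\alpha+2}(\alpha+2)}$, which is exactly the first $|f'(a)|^q$-coefficient. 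The defining relation $\delta_4=[\lambda(1-\mu)]^2-\lambda\mu(1-\mu)+\tfrac{\mu^2}{2}-\delta_3=\tfrac18-\delta_3$ then yields the matching $|f'(b)|^q$-coefficient.

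The one genuinely computational step, and the only place anything can go wrong, is the reduction of $\beta_1$. With $1-\lambda\mu=1$ and $\mu=\tfrac12$ its three terms read $\frac{2}{(\alpha+1)(\alpha+2)}-\frac{1+2^{-(\alpha+1)}}{\alpha+1}+\frac{1+2^{-(\alpha+2)}}{\alpha+2}$. Bringing everything over the common denominator $2^{\alpha+2}(\alpha+1)(\alpha+2)$, the $\alpha\,2^{\alpha+2}$ contributions cancel together with one pair of $2^{\alpha+3}$ terms, the surviving exponential part is $2^{\alpha+2}$, and the linear and constant terms collect to $-(\alpha+3)$; hence $\beta_1=\frac{2^{\alpha+2}-\alpha-3}{2^{\alpha+2}(\alpha+1)(\alpha+2)}$, as claimed. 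Finally, the expression defining $\beta_2$ coincides with $\varepsilon_3-\beta_1=\tfrac18-\beta_1$, giving the last coefficient at once. Substituting these six values into the displayed first-branch bound reproduces the asserted inequality verbatim, so no new inequality is required---the proof reduces entirely to the careful algebraic simplification of $\beta_1$.
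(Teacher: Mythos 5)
Your proposal is correct and takes exactly the route the paper intends: the corollary is stated without proof as a direct specialization of Theorem \ref{2.2}, and your work---identifying the first branch via $\lambda(1-\mu)=0\leq\mu=\tfrac12\leq 1-\lambda\mu=1$, computing $\varepsilon_2=\varepsilon_3=\tfrac18$, $\delta_3=\tfrac{1}{2^{\alpha+2}(\alpha+2)}$, $\delta_4=\tfrac18-\delta_3$, $\beta_1=\tfrac{2^{\alpha+2}-\alpha-3}{2^{\alpha+2}(\alpha+1)(\alpha+2)}$, $\beta_2=\tfrac18-\beta_1$---checks out in every detail, including the cancellation pattern in the $\beta_1$ reduction.
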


\begin{theorem}
\label{2.4}Let $f:I\subseteq \lbrack 0,\infty )\rightarrow \mathbb{R}$ be a
differentiable mapping on $I^{\circ }$ such that $f^{\prime }\in L[a,b]$,
where $a,b\in I^{\circ }$ with $a<b$ and $\lambda ,\mu \in \left[ 0,1\right] 
$. If $\left\vert f^{\prime }\right\vert ^{q}$ is $(\alpha ,m)-$convex on $%
[a,b]$, for $(\alpha ,m)\in \left( 0,1\right] ^{2}$, $mb>a$, $q>1,$ then the
following inequality holds:%
\begin{equation}
\left\vert \lambda \left( \mu f(a)+\left( 1-\mu \right) f(mb)\right) +\left(
1-\lambda \right) f(\mu a+m\left( 1-\mu \right) b)-\frac{1}{mb-a}%
\dint\limits_{a}^{mb}f(x)dx\right\vert  \label{2-3}
\end{equation}%
\begin{equation*}
\leq \left( mb-a\right) \left( \frac{1}{p+1}\right) ^{\frac{1}{p}}.\left\{ 
\begin{array}{cc}
\left[ \vartheta _{1}^{\frac{1}{p}}A^{\frac{1}{q}}+\vartheta _{3}^{\frac{1}{p%
}}B^{\frac{1}{q}}\right] , & \lambda \left( 1-\mu \right) \leq \mu \leq
1-\lambda \mu \\ 
\left[ \vartheta _{2}^{\frac{1}{p}}A^{\frac{1}{q}}+\vartheta _{3}^{\frac{1}{p%
}}B^{\frac{1}{q}}\right] , & \mu \leq \lambda \left( 1-\mu \right) \leq
1-\lambda \mu \\ 
\left[ \vartheta _{2}^{\frac{1}{p}}A^{\frac{1}{q}}+\vartheta _{4}^{\frac{1}{p%
}}B^{\frac{1}{q}}\right] , & \lambda \left( 1-\mu \right) \leq 1-\lambda \mu
\leq \mu%
\end{array}%
\right.
\end{equation*}%
where 
\begin{equation*}
A=\mu \times \min \left\{ \frac{\left\vert f^{\prime }\left( \mu a+m\left(
1-\mu \right) b\right) \right\vert ^{q}+\alpha m\left\vert f^{\prime }\left(
b\right) \right\vert ^{q}}{\alpha +1},\frac{\left\vert f^{\prime }\left(
mb\right) \right\vert ^{q}+\alpha m\left\vert f^{\prime }\left( \frac{\mu
a+m\left( 1-\mu \right) b}{m}\right) \right\vert ^{q}}{\alpha +1}\right\}
\end{equation*}%
\begin{equation*}
\ B=\left( 1-\mu \right) \times \min \left\{ \frac{\left\vert f^{\prime
}\left( \mu a+m\left( 1-\mu \right) b\right) \right\vert ^{q}+\alpha
m\left\vert f^{\prime }\left( \frac{a}{m}\right) \right\vert ^{q}}{\alpha +1}%
,\frac{\left\vert f^{\prime }\left( a\right) \right\vert ^{q}+\alpha
m\left\vert f^{\prime }\left( \frac{\mu a+m\left( 1-\mu \right) b}{m}\right)
\right\vert ^{q}}{\alpha +1}\right\}
\end{equation*}%
\begin{eqnarray}
\vartheta _{1} &=&\left[ \lambda \left( 1-\mu \right) \right] ^{p+1}+\left[
\mu -\lambda \left( 1-\mu \right) \right] ^{p+1},\ \vartheta _{2}=\left[
\lambda \left( 1-\mu \right) \right] ^{p+1}-\left[ \lambda \left( 1-\mu
\right) -\mu \right] ^{p+1},  \notag \\
\vartheta _{3} &=&\left[ 1-\lambda \mu -\mu \right] ^{p+1}+\left[ \lambda
\mu \right] ^{p+1},\ \vartheta _{4}=\left[ \lambda \mu \right] ^{p+1}-\left[
\mu -1+\lambda \mu \right] ^{p+1},  \notag
\end{eqnarray}%
and $\frac{1}{p}+\frac{1}{q}=1.$
\end{theorem}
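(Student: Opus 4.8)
The plan is to open exactly as in the proof of Theorem~\ref{2.2}: apply the identity of Lemma~\ref{2.1}, pass to the modulus inside each of the two integrals, and then replace the power-mean inequality by H\"older's inequality with conjugate exponents $p,q$, $\tfrac1p+\tfrac1q=1$. This factors each integrand into a weight $\lvert -t+\lambda(1-\mu)\rvert$ (respectively $\lvert -t+(1-\lambda\mu)\rvert$) and the quantity $\lvert f'(ta+m(1-t)b)\rvert$, giving
\begin{equation*}
\int_{0}^{\mu}\lvert -t+\lambda(1-\mu)\rvert\,\lvert f'(ta+m(1-t)b)\rvert\,dt
\le\Bigl(\int_{0}^{\mu}\lvert -t+\lambda(1-\mu)\rvert^{p}dt\Bigr)^{1/p}
\Bigl(\int_{0}^{\mu}\lvert f'(ta+m(1-t)b)\rvert^{q}dt\Bigr)^{1/q}
\end{equation*}
and the analogous bound on $[\mu,1]$ with weight $\lvert -t+(1-\lambda\mu)\rvert$.

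First I would dispose of the pure weight integrals. Splitting $[0,\mu]$ at the zero $t=\lambda(1-\mu)$ of the linear form and integrating the $p$-th power gives $\int_{0}^{\mu}\lvert -t+\lambda(1-\mu)\rvert^{p}dt=\vartheta_{1}/(p+1)$ when $\mu\ge\lambda(1-\mu)$ and $=\vartheta_{2}/(p+1)$ when $\mu\le\lambda(1-\mu)$; likewise $\int_{\mu}^{1}\lvert -t+(1-\lambda\mu)\rvert^{p}dt=\vartheta_{3}/(p+1)$ when $\mu\le 1-\lambda\mu$ and $=\vartheta_{4}/(p+1)$ when $\mu\ge 1-\lambda\mu$. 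Factoring out the common $(1/(p+1))^{1/p}$ reproduces the prefactor of \eqref{2-3} and fixes which $\vartheta_{i}$ appears in each of the three stated ranges of $\mu$.

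The decisive step is the estimation of the two $\lvert f'\rvert^{q}$ integrals, and here---rather than using $(\alpha,m)$-convexity pointwise as in Theorem~\ref{2.2}---I would apply the Hadamard-type bound of Theorem~\ref{A.1} to $\lvert f'\rvert^{q}$, which is $(\alpha,m)$-convex by hypothesis. The substitution $x=ta+m(1-t)b$ sends the first integral to $\frac{1}{mb-a}\int_{\mu a+m(1-\mu)b}^{mb}\lvert f'(x)\rvert^{q}dx$, an interval of length $\mu(mb-a)$; thus the integral equals $\mu$ times the average of $\lvert f'\rvert^{q}$ over $[\mu a+m(1-\mu)b,\,mb]$, and Theorem~\ref{A.1} on this interval bounds it by precisely $A$. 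The same change of variables turns the second integral into $\frac{1}{mb-a}\int_{a}^{\mu a+m(1-\mu)b}\lvert f'(x)\rvert^{q}dx$, i.e.\ $(1-\mu)$ times the average over $[a,\,\mu a+m(1-\mu)b]$, and Theorem~\ref{A.1} on this interval yields $B$.

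Combining the weight estimates with the bounds $A$ and $B$ across the three cases delivers \eqref{2-3}. The only routine labour is the sign bookkeeping in the weight integrals; the load-bearing idea is the change of variables that exhibits each $\lvert f'\rvert^{q}$ integral as an average over a subinterval of $[a,mb]$ with endpoints $a$, $\mu a+m(1-\mu)b$, $mb$ and scalings $\mu$, $1-\mu$, so that Theorem~\ref{A.1} applies and generates the two-fold minimum in $A$ and $B$. Matching these endpoints and scalings exactly is the main place where care is required.
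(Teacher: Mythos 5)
Your proposal is correct and is essentially the paper's own proof: the identity of Lemma \ref{2.1} followed by H\"older's inequality, the elementary evaluation of the weight integrals as $\vartheta_{i}/(p+1)$ according to the position of $\lambda(1-\mu)$ and $1-\lambda\mu$ relative to $\mu$, and the same change of variables $x=ta+m(1-t)b$ that exhibits the two $\lvert f'\rvert^{q}$ integrals as $\mu$ and $1-\mu$ times averages over $[\mu a+m(1-\mu)b,\,mb]$ and $[a,\,\mu a+m(1-\mu)b]$, to which Theorem \ref{A.1} is applied to produce $A$ and $B$. The only point the paper adds that you leave implicit is the remark that the bounds remain valid in the degenerate cases $\mu=0$ and $\mu=1$, where the averaging normalization breaks down but both sides vanish trivially.
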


\begin{proof}
From Lemma \ref{2.1} and by H\"{o}lder's integral inequality, we have%
\begin{eqnarray}
&&\left\vert \lambda \left( \mu f(a)+\left( 1-\mu \right) f(mb)\right)
+\left( 1-\lambda \right) f(\mu a+m\left( 1-\mu \right) b)-\frac{1}{mb-a}%
\dint\limits_{a}^{mb}f(x)dx\right\vert  \notag \\
&\leq &\left( mb-a\right) \left[ \dint\limits_{0}^{\mu }\left\vert
-t+\lambda \left( 1-\mu \right) \right\vert \left\vert f^{\prime }\left(
ta+m(1-t)b\right) \right\vert dt\right.  \notag \\
&&\left. +\dint\limits_{\mu }^{1}\left\vert -t+\left( 1-\alpha \lambda
\right) \right\vert \left\vert f^{\prime }\left( ta+m(1-t)b\right)
\right\vert dt\right]  \notag \\
&\leq &\left( mb-a\right) \left\{ \left( \dint\limits_{0}^{\mu }\left\vert
-t+\lambda \left( 1-\mu \right) \right\vert ^{p}dt\right) ^{\frac{1}{p}%
}\left( \dint\limits_{0}^{\mu }\left\vert f^{\prime }\left(
ta+m(1-t)b\right) \right\vert ^{q}dt\right) ^{\frac{1}{q}}\right.  \notag \\
&&\left. +\left( \dint\limits_{\mu }^{1}\left\vert -t+\left( 1-\lambda \mu
\right) \right\vert ^{p}dt\right) ^{\frac{1}{p}}\left( \dint\limits_{\mu
}^{1}\left\vert f^{\prime }\left( ta+m(1-t)b\right) \right\vert
^{q}dt\right) ^{\frac{1}{q}}\right\}  \label{2-3a}
\end{eqnarray}%
Since $\left\vert f^{\prime }\right\vert ^{q}$ is $(\alpha ,m)-$convex on $%
[a,b]$, for $(\alpha ,m)\in \left( 0,1\right] ^{2}$ and $\mu \in \left( 0,1%
\right] $ by the inequality (\ref{1-3}), we get%
\begin{equation}
\dint\limits_{0}^{\mu }\left\vert f^{\prime }\left( ta+m(1-t)b\right)
\right\vert ^{q}dt=\mu \left[ \frac{1}{\mu \left( mb-a\right) }%
\dint\limits_{\mu a+m\left( 1-\mu \right) b}^{mb}\left\vert f^{\prime
}\left( x\right) \right\vert ^{q}dx\right] \leq  \label{2-3b}
\end{equation}%
\begin{equation*}
\mu \times \min \left\{ \frac{\left\vert f^{\prime }\left( \mu a+m\left(
1-\mu \right) b\right) \right\vert ^{q}+\alpha m\left\vert f^{\prime }\left(
b\right) \right\vert ^{q}}{\alpha +1},\frac{\left\vert f^{\prime }\left(
mb\right) \right\vert ^{q}+\alpha m\left\vert f^{\prime }\left( \frac{\mu
a+m\left( 1-\mu \right) b}{m}\right) \right\vert ^{q}}{\alpha +1}\right\} .
\end{equation*}%
The inequality (\ref{2-3b}) holds for $\mu =0$ too. Similarly, for $\mu \in %
\left[ 0,1\right) $ by the inequality (\ref{1-3}), we have%
\begin{equation}
\dint\limits_{\mu }^{1}\left\vert f^{\prime }\left( ta+m(1-t)b\right)
\right\vert ^{q}dt=\left( 1-\mu \right) \left[ \frac{1}{\left( 1-\mu \right)
\left( mb-a\right) }\dint\limits_{a}^{\mu a+m\left( 1-\mu \right)
b}\left\vert f^{\prime }\left( x\right) \right\vert ^{q}dx\right] \leq
\label{2-3c}
\end{equation}%
\begin{equation*}
\left( 1-\mu \right) \times \min \left\{ \frac{\left\vert f^{\prime }\left(
\mu a+m\left( 1-\mu \right) b\right) \right\vert ^{q}+\alpha m\left\vert
f^{\prime }\left( \frac{a}{m}\right) \right\vert ^{q}}{\alpha +1},\frac{%
\left\vert f^{\prime }\left( a\right) \right\vert ^{q}+\alpha m\left\vert
f^{\prime }\left( \frac{\mu a+m\left( 1-\mu \right) b}{m}\right) \right\vert
^{q}}{\alpha +1}\right\} .
\end{equation*}%
The inequality (\ref{2-3b}) holds for $\mu =1$ too. By simple computation%
\begin{equation}
\dint\limits_{0}^{\mu }\left\vert -t+\lambda \left( 1-\mu \right)
\right\vert ^{p}dt=\left\{ 
\begin{array}{cc}
\frac{\left[ \lambda \left( 1-\mu \right) \right] ^{p+1}+\left[ \mu -\lambda
\left( 1-\mu \right) \right] ^{p+1}}{p+1}, & \lambda \left( 1-\mu \right)
\leq \mu \\ 
\frac{\left[ \lambda \left( 1-\mu \right) \right] ^{p+1}-\left[ \lambda
\left( 1-\mu \right) -\mu \right] ^{p+1}}{p+1}, & \lambda \left( 1-\mu
\right) \geq \mu%
\end{array}%
\right. ,  \label{2-3d}
\end{equation}%
and%
\begin{equation}
\dint\limits_{\mu }^{1}\left\vert -t+\left( 1-\lambda \mu \right)
\right\vert ^{p}dt=\left\{ 
\begin{array}{cc}
\frac{\left[ 1-\lambda \mu -\mu \right] ^{p+1}+\left[ \lambda \mu \right]
^{p+1}}{p+1}, & \mu \leq 1-\lambda \mu \\ 
\frac{\left[ \lambda \mu \right] ^{p+1}-\left[ \mu -1+\lambda \mu \right]
^{p+1}}{p+1}, & \mu \geq 1-\lambda \mu%
\end{array}%
\right. ,  \label{2-3e}
\end{equation}%
thus, using (\ref{2-3b})-(\ref{2-3e}) in (\ref{2-3a}), we obtain the
inequality (\ref{2-3}). This completes the proof.
\end{proof}

\begin{corollary}
\label{2.5}Under the assumptions of Theorem \ref{2.4} with $\mu =\frac{1}{2}$
and $\lambda =\frac{1}{3}$, we have%
\begin{eqnarray*}
&&\left\vert \frac{1}{6}\left[ f(a)+4f\left( \frac{a+mb}{2}\right) +f(mb)%
\right] -\frac{1}{mb-a}\dint\limits_{a}^{mb}f(x)dx\right\vert \\
&\leq &\left( \frac{mb-a}{12}\right) \left( \frac{2^{p+1}+1}{3\left(
p+1\right) }\right) ^{\frac{1}{p}}\left( A_{1}^{\frac{1}{q}}+B_{1}^{\frac{1}{%
q}}\right) ,
\end{eqnarray*}%
where%
\begin{equation*}
A_{1}=\min \left\{ \frac{\left\vert f^{\prime }\left( \frac{a+mb}{2}\right)
\right\vert ^{q}+\alpha m\left\vert f^{\prime }\left( b\right) \right\vert
^{q}}{\alpha +1},\frac{\left\vert f^{\prime }\left( mb\right) \right\vert
^{q}+\alpha m\left\vert f^{\prime }\left( \frac{a+mb}{2m}\right) \right\vert
^{q}}{\alpha +1}\right\} ,
\end{equation*}%
and%
\begin{equation*}
B_{1}=\min \left\{ \frac{\left\vert f^{\prime }\left( \frac{a+mb}{2}\right)
\right\vert ^{q}+\alpha m\left\vert f^{\prime }\left( \frac{a}{m}\right)
\right\vert ^{q}}{\alpha +1},\frac{\left\vert f^{\prime }\left( a\right)
\right\vert ^{q}+\alpha m\left\vert f^{\prime }\left( \frac{a+mb}{2m}\right)
\right\vert ^{q}}{\alpha +1}\right\} .
\end{equation*}
\end{corollary}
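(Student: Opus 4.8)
The plan is to specialize Theorem \ref{2.4} to the parameter values $\mu=\frac{1}{2}$ and $\lambda=\frac{1}{3}$; there is no new analytic content, so the whole task reduces to identifying the correct branch of the case distinction in (\ref{2-3}) and then simplifying the resulting constant. First I would evaluate the three threshold quantities that govern the cases. With these parameters one has $\lambda(1-\mu)=\frac{1}{6}$, $\mu=\frac{1}{2}$, and $1-\lambda\mu=\frac{5}{6}$, so that $\lambda(1-\mu)\leq\mu\leq 1-\lambda\mu$. Hence we land in the first branch of (\ref{2-3}), and the relevant quantities are $\vartheta_{1}$, $\vartheta_{3}$, $A$, and $B$.

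Next I would compute $\vartheta_{1}$ and $\vartheta_{3}$ directly from their definitions. Substituting the values gives $\vartheta_{1}=\left(\frac{1}{6}\right)^{p+1}+\left(\frac{1}{3}\right)^{p+1}$ and $\vartheta_{3}=\left(\frac{1}{3}\right)^{p+1}+\left(\frac{1}{6}\right)^{p+1}$, so both equal $\frac{2^{p+1}+1}{6^{p+1}}$ once one writes $3^{p+1}=6^{p+1}/2^{p+1}$. The coincidence $\vartheta_{1}=\vartheta_{3}$ is exactly what collapses the two $\vartheta$-factors into the single constant appearing in the statement. At the same time, since $\mu=\frac{1}{2}$ forces $\mu a+m(1-\mu)b=\frac{a+mb}{2}$ and $\frac{\mu a+m(1-\mu)b}{m}=\frac{a+mb}{2m}$, the quantities $A$ and $B$ of Theorem \ref{2.4} reduce to $A=\frac{1}{2}A_{1}$ and $B=\frac{1}{2}B_{1}$ with $A_{1}$, $B_{1}$ as defined in the statement.

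Finally I would assemble and simplify the constant. After the substitutions the bound from (\ref{2-3}) becomes $(mb-a)\left(\frac{1}{p+1}\right)^{1/p}\vartheta_{1}^{1/p}\left(\frac{1}{2}\right)^{1/q}\left(A_{1}^{1/q}+B_{1}^{1/q}\right)$. The one maneuver worth flagging is the handling of the hidden factor $\frac{1}{2}$ inside $A$ and $B$: using the conjugacy $\frac{1}{q}=1-\frac{1}{p}$ one writes $\left(\frac{1}{2}\right)^{1/q}=\frac{1}{2}\,2^{1/p}$, so that the factor $2^{1/p}$ can be absorbed into the $p$-th root. Carrying out the arithmetic, $\left(\frac{1}{p+1}\right)^{1/p}\vartheta_{1}^{1/p}=\frac{1}{6}\left(\frac{2^{p+1}+1}{6(p+1)}\right)^{1/p}$, and multiplying by $\frac{1}{2}\,2^{1/p}$ yields the coefficient $\frac{1}{12}\left(\frac{2^{p+1}+1}{3(p+1)}\right)^{1/p}$, which is precisely the constant in the corollary.

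I expect the only real obstacle to be careful bookkeeping rather than any conceptual difficulty: one must correctly identify the first case and then track how the factor $\mu=\frac{1}{2}$ buried inside $A$ and $B$ recombines with the H\"{o}lder coefficient through the relation $\frac{1}{p}+\frac{1}{q}=1$. Once those two points are handled, the result follows by direct substitution into Theorem \ref{2.4}.
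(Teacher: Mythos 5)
Your proposal is correct and is exactly the argument the paper intends (the corollary is stated without proof as a direct specialization of Theorem \ref{2.4}): you identify the correct branch $\lambda(1-\mu)\leq\mu\leq 1-\lambda\mu$, observe that $\vartheta_{1}=\vartheta_{3}=\frac{2^{p+1}+1}{6^{p+1}}$, and correctly recombine the factor $\left(\tfrac{1}{2}\right)^{1/q}$ from $A=\tfrac{1}{2}A_{1}$, $B=\tfrac{1}{2}B_{1}$ with the H\"{o}lder constant via $\tfrac{1}{q}=1-\tfrac{1}{p}$ to obtain $\frac{1}{12}\left(\frac{2^{p+1}+1}{3(p+1)}\right)^{1/p}$. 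All the arithmetic checks out.
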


\begin{remark}
In Corollary \ref{2.5}, if we take $\alpha =m=1,$ then we obtain the
following inequality%
\begin{eqnarray*}
&&\left\vert \frac{1}{6}\left[ f(a)+4f\left( \frac{a+b}{2}\right) +f(b)%
\right] -\frac{1}{b-a}\dint\limits_{a}^{b}f(x)dx\right\vert \leq \left( 
\frac{b-a}{12}\right) \left( \frac{1+2^{p+1}}{3\left( p+1\right) }\right) ^{%
\frac{1}{p}} \\
&&\times \left\{ \left( \frac{\left\vert f^{\prime }\left( \frac{a+b}{2}%
\right) \right\vert ^{q}+\left\vert f^{\prime }\left( a\right) \right\vert
^{q}}{2}\right) ^{\frac{1}{q}}+\left( \frac{\left\vert f^{\prime }\left( 
\frac{a+b}{2}\right) \right\vert ^{q}+\left\vert f^{\prime }\left( b\right)
\right\vert ^{q}}{2}\right) ^{\frac{1}{q}}\right\} ,
\end{eqnarray*}%
which is the same of the inequality in \cite[Corollary 3]{SSO10}
\end{remark}

\begin{corollary}
Under the assumptions of Theorem \ref{2.4} with $\mu =\frac{1}{2}$ and $%
\lambda =1$, we have%
\begin{eqnarray*}
&&\left\vert \frac{f(a)+f(mb)}{2}-\frac{1}{mb-a}\dint\limits_{a}^{mb}f(x)dx%
\right\vert \\
&\leq &\left( \frac{mb-a}{4}\right) \left( \frac{1}{p+1}\right) ^{\frac{1}{p}%
}\left( A_{1}^{\frac{1}{q}}+B_{1}^{\frac{1}{q}}\right) .
\end{eqnarray*}
\end{corollary}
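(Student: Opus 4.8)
The plan is to obtain this corollary by a direct specialization of Theorem \ref{2.4} to the parameter values $\mu = \frac{1}{2}$ and $\lambda = 1$; no new estimation is required, only careful bookkeeping. First I would substitute into the left-hand side: since $\lambda = 1$, the term $(1-\lambda) f(\mu a + m(1-\mu)b)$ drops out, while $\lambda(\mu f(a) + (1-\mu) f(mb)) = \frac{1}{2}(f(a) + f(mb))$, which recovers the trapezoidal expression appearing on the left of the claimed inequality.

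Next I would identify the applicable case. With $\mu = \frac{1}{2}$ and $\lambda = 1$ one computes $\lambda(1-\mu) = \mu = 1 - \lambda\mu = \frac{1}{2}$, so all three of the case-defining chains hold with equality simultaneously; the three branches of the bound therefore coincide, and I may evaluate any one of them. Plugging these values into the definitions of $\vartheta_1,\dots,\vartheta_4$, every one of the ``difference'' brackets (such as $\mu - \lambda(1-\mu)$ and $1 - \lambda\mu - \mu$) vanishes, so $\vartheta_1 = \vartheta_2 = \vartheta_3 = \vartheta_4 = \left(\frac{1}{2}\right)^{p+1}$, whence $\vartheta_i^{1/p} = \left(\frac{1}{2}\right)^{(p+1)/p}$ for each $i$.

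Then I would relate $A,B$ to $A_1,B_1$. Since $\mu a + m(1-\mu) b = \frac{a+mb}{2}$ when $\mu = \frac{1}{2}$, comparing the definitions of $A$ and $B$ in Theorem \ref{2.4} with those of $A_1$ and $B_1$ in Corollary \ref{2.5} shows that the minima agree termwise, while the prefactors $\mu$ and $1-\mu$ both equal $\frac{1}{2}$; hence $A = \frac{1}{2} A_1$ and $B = \frac{1}{2} B_1$, so that $A^{1/q} = \left(\frac{1}{2}\right)^{1/q} A_1^{1/q}$ and likewise $B^{1/q} = \left(\frac{1}{2}\right)^{1/q} B_1^{1/q}$.

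Finally I would assemble and simplify the constant. The bound furnished by Theorem \ref{2.4} becomes $(mb-a)\left(\frac{1}{p+1}\right)^{1/p}\left(\frac{1}{2}\right)^{(p+1)/p}\left(\frac{1}{2}\right)^{1/q}\left(A_1^{1/q} + B_1^{1/q}\right)$. The only step demanding attention is the collection of the powers of $\frac{1}{2}$: using the conjugacy relation $\frac{1}{p} + \frac{1}{q} = 1$, i.e. $\frac{1}{q} = \frac{p-1}{p}$, the combined exponent is $\frac{p+1}{p} + \frac{1}{q} = \frac{p+1}{p} + \frac{p-1}{p} = 2$, giving $\left(\frac{1}{2}\right)^2 = \frac{1}{4}$. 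This collapses the constant to $\frac{mb-a}{4}\left(\frac{1}{p+1}\right)^{1/p}$ and yields exactly the stated inequality. The main, and essentially only, obstacle is this exponent arithmetic at the degenerate point where all three cases of Theorem \ref{2.4} coincide; everything else is immediate substitution.
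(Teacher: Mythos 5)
Your proposal is correct and follows exactly the route the paper intends: the corollary is a direct specialization of Theorem \ref{2.4} at $\mu=\tfrac12$, $\lambda=1$, where all three case conditions coincide, each $\vartheta_i$ equals $\left(\tfrac12\right)^{p+1}$, $A=\tfrac12 A_1$, $B=\tfrac12 B_1$, and the exponent identity $\tfrac{p+1}{p}+\tfrac1q=2$ collapses the constant to $\tfrac{mb-a}{4}\left(\tfrac{1}{p+1}\right)^{1/p}$. The paper states the corollary without proof precisely because this substitution is routine, and your bookkeeping verifies it correctly.
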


\begin{corollary}
Under the assumptions of Theorem \ref{2.4} with $\mu =\frac{1}{2}$ and $%
\lambda =0$, we have%
\begin{eqnarray*}
&&\left\vert f\left( \frac{a+mb}{2}\right) -\frac{1}{mb-a}%
\dint\limits_{a}^{mb}f(x)dx\right\vert \\
&\leq &\left( \frac{mb-a}{4}\right) \left( \frac{1}{p+1}\right) ^{\frac{1}{p}%
}\left( A_{1}^{\frac{1}{q}}+B_{1}^{\frac{1}{q}}\right) .
\end{eqnarray*}
\end{corollary}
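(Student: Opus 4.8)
The plan is to specialize Theorem~\ref{2.4} to the parameter values $\mu=\frac12$ and $\lambda=0$ and then simplify the resulting expression; no new estimation is needed, since everything follows from the general bound~\eqref{2-3}. First I would substitute these values into the left-hand side of~\eqref{2-3}. Because $\lambda=0$, the trapezoidal term $\lambda(\mu f(a)+(1-\mu)f(mb))$ drops out and the coefficient $1-\lambda$ becomes $1$; because $\mu=\frac12$, the point $\mu a+m(1-\mu)b$ collapses to $\frac{a+mb}{2}$. Hence the left-hand side is exactly $\bigl|f(\frac{a+mb}{2})-\frac{1}{mb-a}\int_a^{mb}f(x)\,dx\bigr|$, which matches the quantity in the corollary.

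Next I would identify which of the three cases of Theorem~\ref{2.4} applies. With $\lambda=0$ and $\mu=\frac12$ one computes $\lambda(1-\mu)=0$, $\mu=\frac12$, and $1-\lambda\mu=1$, so the first chain of inequalities $\lambda(1-\mu)\le\mu\le 1-\lambda\mu$ becomes $0\le\frac12\le 1$ and we are in the first case. The governing quantities are therefore $\vartheta_1$ and $\vartheta_3$, and substituting $\lambda=0,\ \mu=\frac12$ into their definitions gives $\vartheta_1=\vartheta_3=\left(\frac12\right)^{p+1}$. I would also note that with $\mu=\frac12$ the point $\mu a+m(1-\mu)b$ equals $\frac{a+mb}{2}$, so the quantities $A$ and $B$ of Theorem~\ref{2.4} reduce to $A=\frac12 A_1$ and $B=\frac12 B_1$, where $A_1,B_1$ are the expressions displayed in the corollary.

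The only step requiring a little care is the collection of the powers of $\frac12$. Inserting the above into the first-case bound produces the factor $\vartheta_1^{1/p}A^{1/q}=\left(\frac12\right)^{(p+1)/p}\left(\frac12\right)^{1/q}A_1^{1/q}$, and similarly for the $B$-term. The exponent on $\frac12$ is $\frac{p+1}{p}+\frac1q$, which I would simplify using the conjugacy relation $\frac1p+\frac1q=1$, i.e. $\frac1q=\frac{p-1}{p}$; this yields $\frac{p+1}{p}+\frac{p-1}{p}=2$, so each term carries the clean factor $\left(\frac12\right)^2=\frac14$. Pulling this common factor out alongside the $(mb-a)$ and the $\left(\frac{1}{p+1}\right)^{1/p}$ already present in Theorem~\ref{2.4} gives precisely $\left(\frac{mb-a}{4}\right)\left(\frac{1}{p+1}\right)^{1/p}\bigl(A_1^{1/q}+B_1^{1/q}\bigr)$, as asserted. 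There is no genuine obstacle beyond this exponent bookkeeping: the corollary is a direct specialization of the preceding theorem, and the coincidence that the two governing integrals $\vartheta_1,\vartheta_3$ are equal is exactly what produces the symmetric final form.
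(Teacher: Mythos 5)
Your proposal is correct and is exactly the route the paper intends: the corollary is a direct specialization of Theorem \ref{2.4} to $\mu=\frac{1}{2}$, $\lambda=0$ (first case, since $0\leq\frac{1}{2}\leq 1$), with $\vartheta_{1}=\vartheta_{3}=\left(\frac{1}{2}\right)^{p+1}$, $A=\frac{1}{2}A_{1}$, $B=\frac{1}{2}B_{1}$, and the exponent identity $\frac{p+1}{p}+\frac{1}{q}=2$ producing the factor $\frac{1}{4}$. The paper leaves this substitution unwritten, and your bookkeeping fills it in correctly.
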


\begin{theorem}
\label{2.6}Let $f:I\subseteq \lbrack 0,\infty )\rightarrow \mathbb{R}$ be a
differentiable mapping on $I^{\circ }$ such that $f^{\prime }\in L[a,b]$,
where $a,b\in I^{\circ }$ with $a<b$ and $\lambda ,\mu \in \left[ 0,1\right] 
$. If $\left\vert f^{\prime }\right\vert ^{q}$ is $(\alpha ,m)-$convex on $%
[a,b]$, for $(\alpha ,m)\in \left( 0,1\right] ^{2}$, $mb>a$, $q>1,$ then the
following inequality holds:%
\begin{eqnarray}
&&\left\vert \lambda \left( \mu f(a)+\left( 1-\mu \right) f(mb)\right)
+\left( 1-\lambda \right) f(\mu a+m\left( 1-\mu \right) b)-\frac{1}{mb-a}%
\dint\limits_{a}^{mb}f(x)dx\right\vert  \label{2-4} \\
&\leq &\left( mb-a\right) \left( \frac{1}{p+1}\right) ^{\frac{1}{p}}.\left\{ 
\begin{array}{cc}
\left[ \vartheta _{1}^{\frac{1}{p}}A_{2}^{\frac{1}{q}}+\vartheta _{3}^{\frac{%
1}{p}}B_{2}^{\frac{1}{q}}\right] , & \lambda \left( 1-\mu \right) \leq \mu
\leq 1-\lambda \mu \\ 
\left[ \vartheta _{2}^{\frac{1}{p}}A_{2}^{\frac{1}{q}}+\vartheta _{3}^{\frac{%
1}{p}}B_{2}^{\frac{1}{q}}\right] , & \mu \leq \lambda \left( 1-\mu \right)
\leq 1-\lambda \mu \\ 
\left[ \vartheta _{2}^{\frac{1}{p}}A_{2}^{\frac{1}{q}}+\vartheta _{4}^{\frac{%
1}{p}}B_{2}^{\frac{1}{q}}\right] , & \lambda \left( 1-\mu \right) \leq
1-\lambda \mu \leq \mu%
\end{array}%
\right. ,  \notag
\end{eqnarray}%
where%
\begin{equation*}
A_{2}=\frac{\mu ^{\alpha +1}\left\vert f^{\prime }\left( a\right)
\right\vert ^{q}+m\left[ \mu \left( \alpha +1\right) -\mu ^{\alpha +1}\right]
\left\vert f^{\prime }\left( b\right) \right\vert ^{q}}{\alpha +1}
\end{equation*}%
and%
\begin{equation*}
B_{2}=\frac{\left( 1-\mu ^{\alpha +1}\right) \left\vert f^{\prime }\left(
a\right) \right\vert ^{q}+m\left[ \left( \mu ^{\alpha +1}-1\right) +\left(
1-\mu \right) \left( \alpha +1\right) \right] \left\vert f^{\prime }\left(
b\right) \right\vert ^{q}}{\alpha +1}.
\end{equation*}
\end{theorem}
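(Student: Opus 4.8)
The plan is to follow the scheme of the proof of Theorem \ref{2.4} almost line for line, the sole difference being the way in which the $q$-th power integrals of $\left\vert f^{\prime }\right\vert $ along the segment are estimated. First I would invoke Lemma \ref{2.1}, pass to absolute values, and apply H\"{o}lder's integral inequality with conjugate exponents $p,q$ exactly as in (\ref{2-3a}). This splits each of the two pieces, over $\left[ 0,\mu \right] $ and over $\left[ \mu ,1\right] $, into a product of a weight factor $\left( \int \left\vert -t+\cdots \right\vert ^{p}dt\right) ^{1/p}$ and a factor $\left( \int \left\vert f^{\prime }\left( ta+m(1-t)b\right) \right\vert ^{q}dt\right) ^{1/q}$.

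For the weight integrals $\int_{0}^{\mu }\left\vert -t+\lambda (1-\mu )\right\vert ^{p}dt$ and $\int_{\mu }^{1}\left\vert -t+(1-\lambda \mu )\right\vert ^{p}dt$, I would reuse the case analysis (\ref{2-3d})--(\ref{2-3e}) verbatim, since these integrands are identical to those in Theorem \ref{2.4}. They produce exactly $\vartheta _{1},\vartheta _{2}$ on $\left[ 0,\mu \right] $ (according to the sign of $\lambda (1-\mu )-\mu $) and $\vartheta _{3},\vartheta _{4}$ on $\left[ \mu ,1\right] $ (according to the sign of $1-\lambda \mu -\mu $), each divided by $p+1$. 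This accounts for the three cases displayed in (\ref{2-4}) and for the common factor $\left( 1/(p+1)\right) ^{1/p}$.

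The genuinely new step, and the only place where this proof departs from that of Theorem \ref{2.4}, is the estimation of $\int_{0}^{\mu }\left\vert f^{\prime }\left( ta+m(1-t)b\right) \right\vert ^{q}dt$ and $\int_{\mu }^{1}\left\vert f^{\prime }\left( ta+m(1-t)b\right) \right\vert ^{q}dt$. Instead of rewriting them as averages and applying the Hermite--Hadamard type bound (\ref{1-3}) (which yielded the $\min$ expressions $A,B$ there), I would here apply the defining $(\alpha ,m)$-convexity inequality $\left\vert f^{\prime }\left( ta+m(1-t)b\right) \right\vert ^{q}\leq t^{\alpha }\left\vert f^{\prime }(a)\right\vert ^{q}+m(1-t^{\alpha })\left\vert f^{\prime }(b)\right\vert ^{q}$ directly and integrate term by term. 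Using $\int_{0}^{\mu }t^{\alpha }dt=\mu ^{\alpha +1}/(\alpha +1)$ and $\int_{0}^{\mu }m(1-t^{\alpha })dt=m\left[ \mu (\alpha +1)-\mu ^{\alpha +1}\right] /(\alpha +1)$ produces precisely $A_{2}$; likewise $\int_{\mu }^{1}t^{\alpha }dt=(1-\mu ^{\alpha +1})/(\alpha +1)$ together with $\int_{\mu }^{1}m(1-t^{\alpha })dt=m\left[ (\mu ^{\alpha +1}-1)+(1-\mu )(\alpha +1)\right] /(\alpha +1)$ produces $B_{2}$.

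Substituting these two estimates, together with the weight integrals from (\ref{2-3d})--(\ref{2-3e}), back into the H\"{o}lder bound and matching the three sign regions then gives (\ref{2-4}). I do not expect a genuine obstacle: once Lemma \ref{2.1} and H\"{o}lder's inequality are in place the argument is routine. The only point needing care is the bookkeeping in the $(\lambda ,\mu )$-plane, so that the correct weight $\vartheta _{i}^{1/p}$ is paired with $A_{2}^{1/q}$ and the correct $\vartheta _{j}^{1/p}$ with $B_{2}^{1/q}$ in each of the three cases; this case split is exactly the one already performed for the weight integrals, so the three regions align automatically.
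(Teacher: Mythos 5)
Your proposal is correct and coincides with the paper's own proof: the author likewise starts from Lemma \ref{2.1}, applies H\"{o}lder's inequality to reach (\ref{2-3a}), bounds $\left\vert f^{\prime }\left( ta+m(1-t)b\right) \right\vert ^{q}$ by the defining $(\alpha ,m)$-convexity inequality and integrates term by term to produce $A_{2}$ and $B_{2}$, and then inserts the weight integrals (\ref{2-3d})--(\ref{2-3e}) to obtain (\ref{2-4}). Your identification of the only departure from Theorem \ref{2.4} (using the pointwise convexity bound instead of the Hadamard-type inequality (\ref{1-3})) is exactly what the paper does.
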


\begin{proof}
From Lemma \ref{2.1} and by H\"{o}lder's integral inequality, we have the
inequality (\ref{2-3a}). Since Since $\left\vert f^{\prime }\right\vert ^{q}$
is $(\alpha ,m)-$convex on $[a,b],$ we know that for $t\in \left[ 0,1-\alpha %
\right] $ and $t\in \left[ 1-\alpha ,1\right] $ 
\begin{equation*}
\left\vert f^{\prime }\left( ta+m(1-t)b\right) \right\vert ^{q}\leq
t^{\alpha }\left\vert f^{\prime }\left( a\right) \right\vert
^{q}+m(1-t^{\alpha })\left\vert f^{\prime }\left( b\right) \right\vert ^{q}.
\end{equation*}%
Hence 
\begin{eqnarray*}
&&\left\vert \lambda \left( \mu f(a)+\left( 1-\mu \right) f(mb)\right)
+\left( 1-\lambda \right) f(\mu a+m\left( 1-\mu \right) b)-\frac{1}{mb-a}%
\dint\limits_{a}^{mb}f(x)dx\right\vert \\
&\leq &\left( mb-a\right) \left\{ \left( \dint\limits_{0}^{\mu }\left\vert
-t+\lambda \left( 1-\mu \right) \right\vert ^{p}dt\right) ^{\frac{1}{p}%
}\left( \dint\limits_{0}^{\mu }\left[ t^{\alpha }\left\vert f^{\prime
}\left( a\right) \right\vert ^{q}+m(1-t^{\alpha })\left\vert f^{\prime
}\left( b\right) \right\vert ^{q}\right] dt\right) ^{\frac{1}{q}}\right. \\
&&\left. +\left( \dint\limits_{\mu }^{1}\left\vert -t+\left( 1-\alpha
\lambda \right) \right\vert ^{p}dt\right) ^{\frac{1}{p}}\left(
\dint\limits_{\mu }^{1}\left[ t^{\alpha }\left\vert f^{\prime }\left(
a\right) \right\vert ^{q}+m(1-t^{\alpha })\left\vert f^{\prime }\left(
b\right) \right\vert ^{q}\right] dt\right) ^{\frac{1}{q}}\right\} \\
&\leq &\left( mb-a\right) \left\{ \left( \dint\limits_{0}^{\mu }\left\vert
-t+\lambda \left( 1-\mu \right) \right\vert ^{p}dt\right) ^{\frac{1}{p}%
}\left( \frac{\mu ^{\alpha +1}\left\vert f^{\prime }\left( a\right)
\right\vert ^{q}+m\left[ \mu \left( \alpha +1\right) -\mu ^{\alpha +1}\right]
\left\vert f^{\prime }\left( b\right) \right\vert ^{q}}{\alpha +1}\right) ^{%
\frac{1}{q}}\right.
\end{eqnarray*}%
\begin{equation}
\left. +\left( \dint\limits_{\mu }^{1}\left\vert -t+\left( 1-\alpha \lambda
\right) \right\vert ^{p}dt\right) ^{\frac{1}{p}}\left( \frac{\left( 1-\mu
^{\alpha +1}\right) \left\vert f^{\prime }\left( a\right) \right\vert ^{q}+m%
\left[ \left( \mu ^{\alpha +1}-1\right) +\left( 1-\mu \right) \left( \alpha
+1\right) \right] \left\vert f^{\prime }\left( b\right) \right\vert ^{q}}{%
\alpha +1}\right) ^{\frac{1}{q}}\right\} .  \label{2-4a}
\end{equation}%
thus, using (\ref{2-3d}),(\ref{2-3e}) in (\ref{2-4a}), we obtain the
inequality (\ref{2-4}). This completes the proof.
\end{proof}

\begin{corollary}
Let the assumptions of Theorem \ref{2.6} hold. Then for $\mu =\frac{1}{2}$
and $\lambda =\frac{1}{3}$, from the inequality (\ref{2-4}) we get the
following Simpson type inequality%
\begin{eqnarray*}
&&\left\vert \frac{1}{6}\left[ f(a)+4f\left( \frac{a+mb}{2}\right) +f(mb)%
\right] -\frac{1}{mb-a}\dint\limits_{a}^{mb}f(x)dx\right\vert \\
&\leq &\left( \frac{mb-a}{12}\right) \left( \frac{2^{p+1}+1}{3\left(
p+1\right) }\right) ^{\frac{1}{p}}\left( \frac{1}{2^{\alpha }\left( \alpha
+1\right) }\right) ^{\frac{1}{q}}\left( A_{3}^{\frac{1}{q}}+B_{3}^{\frac{1}{q%
}}\right) .
\end{eqnarray*}%
where%
\begin{equation*}
A_{3}=\left\vert f^{\prime }\left( a\right) \right\vert ^{q}+m\left[
2^{\alpha }\left( \alpha +1\right) -1\right] \left\vert f^{\prime }\left(
b\right) \right\vert ^{q}
\end{equation*}%
and%
\begin{equation*}
B_{3}=\left( 2^{\alpha +1}-1\right) \left\vert f^{\prime }\left( a\right)
\right\vert ^{q}+m\left[ 2^{\alpha }\left( \alpha +1\right) +1-2^{\alpha +1}%
\right] \left\vert f^{\prime }\left( b\right) \right\vert ^{q}.
\end{equation*}
\end{corollary}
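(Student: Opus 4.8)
The plan is to obtain this corollary as a direct specialization of Theorem~\ref{2.6} to the parameters $\mu=\tfrac{1}{2}$, $\lambda=\tfrac{1}{3}$, followed by a bookkeeping simplification of the resulting constants. The first step is to decide which of the three branches of (\ref{2-4}) applies. With these values one computes $\lambda(1-\mu)=\tfrac{1}{6}$, $\mu=\tfrac{1}{2}$, and $1-\lambda\mu=\tfrac{5}{6}$, so that $\lambda(1-\mu)\le\mu\le 1-\lambda\mu$; hence we are in the first case and the bound reads $(mb-a)\left(\frac{1}{p+1}\right)^{1/p}\left[\vartheta_1^{1/p}A_2^{1/q}+\vartheta_3^{1/p}B_2^{1/q}\right]$.

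Next I would confirm that the left-hand side of (\ref{2-4}) collapses to the Simpson expression. Substituting the parameters gives
\[
\lambda\bigl(\mu f(a)+(1-\mu)f(mb)\bigr)+(1-\lambda)f\bigl(\mu a+m(1-\mu)b\bigr)
=\frac{1}{6}\Bigl[f(a)+4f\Bigl(\tfrac{a+mb}{2}\Bigr)+f(mb)\Bigr],
\]
which is exactly the quantity displayed in the corollary. I would then evaluate the kernel constants from their definitions in Theorem~\ref{2.4}; a direct computation should yield
\[
\vartheta_1=\vartheta_3=\Bigl(\tfrac{1}{6}\Bigr)^{p+1}\bigl(2^{p+1}+1\bigr),
\]
so that both $\vartheta_1^{1/p}$ and $\vartheta_3^{1/p}$ equal $\frac{1}{6}\bigl(\frac{2^{p+1}+1}{6}\bigr)^{1/p}$ after extracting the integer part of the exponent.

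The remaining algebra is to rewrite $A_2$ and $B_2$ at $\mu=\tfrac{1}{2}$ in terms of the stated $A_3$ and $B_3$. Matching the coefficients of $|f'(a)|^q$ and $|f'(b)|^q$ separately, I expect the factorizations $A_2=\frac{1}{2^{\alpha+1}(\alpha+1)}A_3$ and $B_2=\frac{1}{2^{\alpha+1}(\alpha+1)}B_3$. Taking $q$-th roots then pulls out the common scalar $\bigl(\frac{1}{2}\bigr)^{1/q}\bigl(\frac{1}{2^{\alpha}(\alpha+1)}\bigr)^{1/q}$, and since $\vartheta_1=\vartheta_3$ the bracket becomes a common prefactor times $A_3^{1/q}+B_3^{1/q}$. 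Assembling everything, the scalar in front reduces to $\frac{mb-a}{6}\bigl(\frac{2^{p+1}+1}{6(p+1)}\bigr)^{1/p}\bigl(\frac{1}{2}\bigr)^{1/q}\bigl(\frac{1}{2^{\alpha}(\alpha+1)}\bigr)^{1/q}$.

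The step I expect to require the most care—though it is only bookkeeping—is collapsing this prefactor into the stated form $\frac{mb-a}{12}\bigl(\frac{2^{p+1}+1}{3(p+1)}\bigr)^{1/p}\bigl(\frac{1}{2^{\alpha}(\alpha+1)}\bigr)^{1/q}$. The key is the conjugacy relation $\frac{1}{q}=1-\frac{1}{p}$, which rewrites $\bigl(\frac{1}{2}\bigr)^{1/q}$ as $\frac{1}{2}\cdot 2^{1/p}$; absorbing the $2^{1/p}$ into the $p$-th-root bracket turns the denominator $6$ into $3$, while the remaining $\frac{1}{6}\cdot\frac{1}{2}$ becomes $\frac{1}{12}$, reconciling the two expressions exactly. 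Once this identity is verified, substituting back into the first-case bound of Theorem~\ref{2.6} completes the proof.
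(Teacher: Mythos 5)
Your proposal is correct and follows exactly the route the paper intends: the corollary is a direct specialization of Theorem \ref{2.6} at $\mu=\tfrac{1}{2}$, $\lambda=\tfrac{1}{3}$ (first branch of (\ref{2-4})), with the factorizations $\vartheta_1=\vartheta_3=(1/6)^{p+1}(2^{p+1}+1)$, $A_2=\tfrac{1}{2^{\alpha+1}(\alpha+1)}A_3$, $B_2=\tfrac{1}{2^{\alpha+1}(\alpha+1)}B_3$, and the conjugacy relation $\tfrac{1}{q}=1-\tfrac{1}{p}$ collapsing the prefactor, all of which check out. The paper gives no written proof for this corollary, so your computation is precisely the omitted verification.
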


\begin{corollary}
\label{2.7}Let the assumptions of Theorem \ref{2.6} hold. Then for $\mu =%
\frac{1}{2}$ and $\lambda =1$, from the inequality (\ref{2-4}) we get the
following Simpson type inequality%
\begin{eqnarray*}
&&\left\vert \frac{f(a)+f(mb)}{2}-\frac{1}{mb-a}\dint\limits_{a}^{mb}f(x)dx%
\right\vert \\
&\leq &\left( \frac{mb-a}{4}\right) \left( \frac{1}{\left( p+1\right) }%
\right) ^{\frac{1}{p}}\left( \frac{1}{2^{\alpha }\left( \alpha +1\right) }%
\right) ^{\frac{1}{q}}\left( A_{3}^{\frac{1}{q}}+B_{3}^{\frac{1}{q}}\right) .
\end{eqnarray*}
\end{corollary}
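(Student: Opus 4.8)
The plan is to obtain this corollary as a pure specialization of Theorem \ref{2.6}, so the whole argument is substitution followed by algebraic simplification; no new analytic input is required. First I would set $\mu=\frac12$ and $\lambda=1$ on the left-hand side of (\ref{2-4}). Since $1-\lambda=0$, the middle term $(1-\lambda)f(\mu a+m(1-\mu)b)$ drops out and $\lambda(\mu f(a)+(1-\mu)f(mb))$ collapses to $\frac{f(a)+f(mb)}{2}$, which reproduces exactly the expression on the left of the claimed inequality.

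Next I would determine which of the three cases of (\ref{2-4}) applies. With $\lambda=1$ and $\mu=\frac12$ one computes $\lambda(1-\mu)=\mu=1-\lambda\mu=\frac12$, so all three branch conditions hold simultaneously with equalities and we sit on their common boundary. I would check that this is harmless by evaluating the $\vartheta_i$: because the ``cross'' arguments vanish, namely $\mu-\lambda(1-\mu)=0$, $\lambda(1-\mu)-\mu=0$, $1-\lambda\mu-\mu=0$ and $\mu-1+\lambda\mu=0$, one gets $\vartheta_1=\vartheta_2=\frac{1}{2^{p+1}}$ and $\vartheta_3=\vartheta_4=\frac{1}{2^{p+1}}$. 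Hence all three cases yield the same bound and I may simply use the first.

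The core computation is to simplify $A_2$ and $B_2$ at $\mu=\frac12$. Substituting $\mu^{\alpha+1}=2^{-(\alpha+1)}$ and $\mu(\alpha+1)=\frac{\alpha+1}{2}$ into $A_2$ and factoring out $\frac{1}{2^{\alpha+1}(\alpha+1)}$ yields $A_2=\frac{1}{2^{\alpha+1}(\alpha+1)}A_3$, since the coefficient of $m|f'(b)|^q$ becomes $2^{\alpha}(\alpha+1)-1$; the analogous reduction of $B_2$ gives $B_2=\frac{1}{2^{\alpha+1}(\alpha+1)}B_3$, where one must verify that the coefficient of $m|f'(b)|^q$ there equals $2^{\alpha}(\alpha+1)+1-2^{\alpha+1}$, matching $B_3$.

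Finally I would assemble the factors. Using $\vartheta_1^{1/p}=\left(2^{-(p+1)}\right)^{1/p}=2^{-1}2^{-1/p}$ and $A_2^{1/q}=\left(2^{-(\alpha+1)}(\alpha+1)^{-1}\right)^{1/q}A_3^{1/q}$, the product becomes $\vartheta_1^{1/p}A_2^{1/q}=2^{-1}\,2^{-1/p}2^{-1/q}\left(2^{-\alpha}(\alpha+1)^{-1}\right)^{1/q}A_3^{1/q}$. The only point requiring care is the bookkeeping of the powers of $2$: the conjugacy relation $\frac1p+\frac1q=1$ gives $2^{-1/p}2^{-1/q}=2^{-1}$, which merges the two factors of $\frac12$ into the single constant $\frac14$, so that $\vartheta_1^{1/p}A_2^{1/q}=\frac14\left(\frac{1}{2^{\alpha}(\alpha+1)}\right)^{1/q}A_3^{1/q}$, and identically $\vartheta_3^{1/p}B_2^{1/q}=\frac14\left(\frac{1}{2^{\alpha}(\alpha+1)}\right)^{1/q}B_3^{1/q}$. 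Multiplying by $(mb-a)\left(\frac{1}{p+1}\right)^{1/p}$ then gives precisely the asserted inequality. The extraction of the clean $\frac14$ from this exponent arithmetic is the only step where a slip is likely; everything else is routine.
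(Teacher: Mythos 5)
Your proposal is correct and is essentially the proof the paper intends: the corollary is a pure specialization of Theorem \ref{2.6}, and your computations (all $\vartheta_i=2^{-(p+1)}$ at the common boundary of the three cases, $A_2=\frac{1}{2^{\alpha+1}(\alpha+1)}A_3$, $B_2=\frac{1}{2^{\alpha+1}(\alpha+1)}B_3$, and the merging of $2^{-1/p}2^{-1/q}=2^{-1}$ to produce the factor $\frac{1}{4}$) check out exactly. The paper offers no separate argument beyond this substitution, so there is nothing to add.
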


\begin{corollary}
In Corollary \ref{2.7}, if we take $\alpha =1$ we obtain the following
inequality%
\begin{eqnarray}
&&\left\vert \frac{f(a)+f(mb)}{2}-\frac{1}{mb-a}\dint\limits_{a}^{mb}f(x)dx%
\right\vert \leq \left( mb-a\right) \left( \frac{1}{p+1}\right) ^{\frac{1}{p}%
}  \notag \\
&&\times \left( \frac{1}{4}\right) ^{1+\frac{1}{q}}\left[ \left( \left\vert
f^{\prime }\left( a\right) \right\vert ^{q}+3m\left\vert f^{\prime }\left(
b\right) \right\vert ^{q}\right) ^{\frac{1}{q}}+\left( 3\left\vert f^{\prime
}\left( a\right) \right\vert ^{q}+m\left\vert f^{\prime }\left( b\right)
\right\vert ^{q}\right) ^{\frac{1}{q}}\right]  \notag
\end{eqnarray}%
\begin{equation}
\leq \left( mb-a\right) \left( \frac{1}{4}\right) ^{1+\frac{1}{q}}\left[
\left( \left\vert f^{\prime }\left( a\right) \right\vert ^{q}+3m\left\vert
f^{\prime }\left( b\right) \right\vert ^{q}\right) ^{\frac{1}{q}}+\left(
3\left\vert f^{\prime }\left( a\right) \right\vert ^{q}+m\left\vert
f^{\prime }\left( b\right) \right\vert ^{q}\right) ^{\frac{1}{q}}\right]
\label{2-5}
\end{equation}%
where we have used the fact that $1/2<\left( 1/\left( p+1\right) \right)
^{1/p}<1.$ We note that the inequality (\ref{2-5}) is the same of the
inequality in \cite[Corollary 2.7 (i)]{P12}.
\end{corollary}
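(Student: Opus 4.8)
The plan is to derive this inequality purely by specializing Corollary \ref{2.7} to the case $\alpha =1$ and then applying an elementary two-sided estimate on the H\"{o}lder constant. First I would substitute $\alpha =1$ into the quantities $A_{3}$ and $B_{3}$ and into the scalar factor $\left( 1/(2^{\alpha }(\alpha +1))\right) ^{1/q}$ that appears in Corollary \ref{2.7}. When $\alpha =1$ we have $2^{\alpha }(\alpha +1)=4$ and $2^{\alpha +1}=4$, so these reduce to $A_{3}=\left\vert f^{\prime }(a)\right\vert ^{q}+3m\left\vert f^{\prime }(b)\right\vert ^{q}$, to $B_{3}=3\left\vert f^{\prime }(a)\right\vert ^{q}+m\left\vert f^{\prime }(b)\right\vert ^{q}$, and to $\left( 1/(2^{\alpha }(\alpha +1))\right) ^{1/q}=\left( 1/4\right) ^{1/q}$.

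Next I would absorb the numerical factor $1/4$ coming from the prefactor $\frac{mb-a}{4}$ of Corollary \ref{2.7} into the factor $\left( 1/4\right) ^{1/q}$, using $\frac{1}{4}\cdot \left( \frac{1}{4}\right) ^{1/q}=\left( \frac{1}{4}\right) ^{1+\frac{1}{q}}$. Keeping the remaining $\left( 1/(p+1)\right) ^{1/p}$ term untouched then produces exactly the first displayed inequality of the statement, with bracketed factor $\left( \left\vert f^{\prime }(a)\right\vert ^{q}+3m\left\vert f^{\prime }(b)\right\vert ^{q}\right) ^{1/q}+\left( 3\left\vert f^{\prime }(a)\right\vert ^{q}+m\left\vert f^{\prime }(b)\right\vert ^{q}\right) ^{1/q}$.

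Finally, for the passage to the second inequality (\ref{2-5}) I would invoke the stated bound $\left( 1/(p+1)\right) ^{1/p}<1$ to discard this factor, which only enlarges the right-hand side and yields (\ref{2-5}). The bound itself is elementary: setting $g(p)=(p+1)^{-1/p}$, one has $\ln g(p)=-\ln (p+1)/p$, and since $\ln (1+p)>p/(1+p)$ for $p>0$, the derivative of $\ln (p+1)/p$ is negative, so $g$ is increasing on $(1,\infty )$; combined with $g(1)=\frac{1}{2}$ and $\lim_{p\to \infty }g(p)=1$, this gives $\frac{1}{2}<\left( 1/(p+1)\right) ^{1/p}<1$ for every $p>1$.

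There is no genuine obstacle in this argument: it is a direct substitution of $\alpha =1$ followed by the regrouping of constants, and then a single monotonicity estimate. The only step that requires a modicum of care is the justification of the two-sided bound on $\left( 1/(p+1)\right) ^{1/p}$, and even that reduces to the standard inequality $\ln (1+p)>p/(1+p)$.
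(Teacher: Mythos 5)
Your proposal is correct and is essentially the paper's own argument: the corollary is proved exactly by substituting $\alpha =1$ into Corollary \ref{2.7} (giving $2^{\alpha }(\alpha +1)=4$, $A_{3}=\left\vert f^{\prime }(a)\right\vert ^{q}+3m\left\vert f^{\prime }(b)\right\vert ^{q}$, $B_{3}=3\left\vert f^{\prime }(a)\right\vert ^{q}+m\left\vert f^{\prime }(b)\right\vert ^{q}$), regrouping $\frac{1}{4}\cdot \left( \frac{1}{4}\right) ^{1/q}=\left( \frac{1}{4}\right) ^{1+\frac{1}{q}}$, and then discarding the factor $\left( 1/(p+1)\right) ^{1/p}<1$ to obtain (\ref{2-5}). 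Your monotonicity argument for $1/2<\left( 1/(p+1)\right) ^{1/p}<1$ on $p\in (1,\infty )$ is a valid justification of the fact the paper merely cites without proof.
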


\begin{corollary}
Under the assumptions of Theorem \ref{2.6} with $\mu =\frac{1}{2}$ and $%
\lambda =0$, we have%
\begin{eqnarray*}
&&\left\vert f\left( \frac{a+mb}{2}\right) -\frac{1}{mb-a}%
\dint\limits_{a}^{mb}f(x)dx\right\vert \\
&\leq &\left( \frac{mb-a}{4}\right) \left( \frac{1}{\left( p+1\right) }%
\right) ^{\frac{1}{p}}\left( \frac{1}{2^{\alpha }\left( \alpha +1\right) }%
\right) ^{\frac{1}{q}}\left( A_{3}^{\frac{1}{q}}+B_{3}^{\frac{1}{q}}\right) .
\end{eqnarray*}
\end{corollary}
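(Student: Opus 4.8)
The plan is to obtain this corollary by a direct specialization of Theorem~\ref{2.6}, so the entire argument reduces to substituting $\mu=\frac12$, $\lambda=0$ and simplifying. First I would identify which of the three branches of (\ref{2-4}) is active. With $\mu=\frac12$ and $\lambda=0$ we have $\lambda(1-\mu)=0$, $\mu=\frac12$ and $1-\lambda\mu=1$, so that $\lambda(1-\mu)\le\mu\le 1-\lambda\mu$; hence the first branch applies. These same values also collapse the left-hand side: the factor $\lambda$ annihilates the endpoint term $\lambda(\mu f(a)+(1-\mu)f(mb))$, while $1-\lambda=1$ leaves $f(\mu a+m(1-\mu)b)=f\!\left(\frac{a+mb}{2}\right)$, which is precisely the quantity appearing in the corollary.

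Next I would evaluate the constants $\vartheta_1$ and $\vartheta_3$ at these parameters. Since $\lambda(1-\mu)=0$ and $\lambda\mu=0$, each definition loses one term, leaving $\vartheta_1=\mu^{p+1}=(1/2)^{p+1}$ and $\vartheta_3=(1-\mu)^{p+1}=(1/2)^{p+1}$. Thus $\vartheta_1=\vartheta_3=2^{-(p+1)}$, and taking $p$-th roots gives $\vartheta_1^{1/p}=\vartheta_3^{1/p}=2^{-(p+1)/p}=2^{-1}\,2^{-1/p}$. This is the same pair of values that arises in Corollary~\ref{2.7} for $\lambda=1$, which explains why the two bounds coincide.

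I would then substitute $\mu=\frac12$ into $A_2$ and $B_2$. Using $\mu^{\alpha+1}=2^{-(\alpha+1)}$ and $\mu(\alpha+1)=\frac12(\alpha+1)$, a short factorization of the numerators shows $A_2=\dfrac{A_3}{2^{\alpha+1}(\alpha+1)}$ and $B_2=\dfrac{B_3}{2^{\alpha+1}(\alpha+1)}$, where $A_3$ and $B_3$ are exactly the constants already introduced in the preceding Simpson-type corollary. Consequently $A_2^{1/q}=\bigl(2^{\alpha+1}(\alpha+1)\bigr)^{-1/q}A_3^{1/q}$ and likewise for $B_2^{1/q}$.

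Finally I would assemble the first branch of (\ref{2-4}) and clean up the powers of $2$. Inserting the above, the common factor becomes $(mb-a)(p+1)^{-1/p}\cdot 2^{-1}2^{-1/p}\cdot\bigl(2^{\alpha+1}(\alpha+1)\bigr)^{-1/q}$ multiplying $A_3^{1/q}+B_3^{1/q}$. The only bookkeeping step is to write $\bigl(2^{\alpha+1}(\alpha+1)\bigr)^{-1/q}=2^{-1/q}\bigl(1/(2^{\alpha}(\alpha+1))\bigr)^{1/q}$ and then use the conjugacy $\frac1p+\frac1q=1$ to combine $2^{-1/p}\,2^{-1/q}=2^{-1}$; together with the $2^{-1}$ coming from $\vartheta^{1/p}$ this yields the single factor $\frac14$ and leaves $\bigl(1/(2^{\alpha}(\alpha+1))\bigr)^{1/q}$, producing exactly the stated inequality. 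I expect no genuine obstacle: the result is pure substitution, and the only point requiring care is the arithmetic with the exponents $1/p$ and $1/q$.
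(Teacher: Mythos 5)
Your proposal is correct and is exactly the argument the paper intends: the corollary is stated without proof as a direct specialization of Theorem \ref{2.6}, and your substitution $\mu=\frac12$, $\lambda=0$ (first branch, $\vartheta_1=\vartheta_3=2^{-(p+1)}$, $A_2=A_3/\bigl(2^{\alpha+1}(\alpha+1)\bigr)$, $B_2=B_3/\bigl(2^{\alpha+1}(\alpha+1)\bigr)$) checks out, with the exponent bookkeeping $2^{-1}\,2^{-1/p}\,2^{-1/q}=\frac14$ via $\frac1p+\frac1q=1$ done correctly. No gaps; your side observation that the bound coincides with the $\lambda=1$ case of Corollary \ref{2.7} is also accurate.
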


\end{document}